\numberwithin{equation}{section}      
\newtheorem{theorem}{Theorem}[section]        
\newtheorem{lemma}[theorem]{Lemma}
\newtheorem{observation}[theorem]{Observation}
\theoremstyle{definition}       
\newtheorem{definition}[theorem]{Definition}       
\theoremstyle{remark}       
\newcommand{\mc}[1]{\mathcal{#1}}       
\newcommand{\mbb}[1]{\mathbb{#1}}
\newcommand{\setm}{\setminus}       
\newcommand{\empt}{\emptyset}       
\newcommand{\subs}{\subset}       
\newcommand{\ran}{\operatorname{ran}}
\def\<{\left\langle}       
\def\>{\right\rangle}       
\newcommand{\prop}[1]{\Delta(#1)}
\newcommand{\lsub}[1]{${}_{#1}$}
\newcommand{\acal}{\mc A}
\newcommand{\cf}{cf}
\def\br#1;#2;{\bigl[ {#1} \bigr]^ {#2} }
\newcommand{\MVCTEXT}{\mathbf {MinVC}}
\newcommand{\MWOTEXT}{\mathbf {MaxWO}}
\newcommand{\MMM}[5]{\mathbb(#1,{#2},#3,#4)\to #5}
\newcommand{\MVC}[4]{\mathbf{M}\MMM{#1}{#2}{#3}{#4}{\MVCTEXT}}
\newcommand{\MWO}[4]{\mathbf{M}\MMM{#1}{#2}{#3}{#4}{\MWOTEXT}}
\newcommand{\cad}[2]{\operatorname C(#1,#2)}
\newcommand{\met}[2]{#1[#2] }
\newcommand{\nmet}[2]{#1[- #2]}
\newcommand{\bo}{{\beth_{\omega}}}
\author[T. Csern\'ak]{Tam\'as Csern\'ak}
\address
      {Eötvös University of Budapest, Hungary  }
\email{tamas@csernak.com}
\author[L. Soukup]{Lajos Soukup}
\address
      { Alfr{\'e}d R{\'e}nyi Institute of Mathematics, Eötvös Loránd Research Network, Budapest, Hungary  }
\email{soukup@renyi.hu}
\subjclass[2010]{03E05, 05C63, 05C65, 05C69}
\keywords{infinite hypergraph, vertex cover, minimal vertex cover, Shelah's revised GCH,  almost disjoint, }
\title[Minimal vertex covers]%
   {Minimal vertex covers in infinite hypergraphs}
\thanks{The preparation of this paper was partially
supported by  OTKA grants K 129211}
\date{\today}       
\begin{document}       

\begin{abstract}
    If $A$ is a set and $S$ is a set of cardinals,
write
\begin{displaymath}
{[A]}^{S}=\{B\subs A: |B|\in S\}.
\end{displaymath}

In this paper a hypergraph
 will be identified with the family  of its edges.
A hypergraph $\mc E$  possesses  {\em property $\cad{k}{\rho}$} iff
 $|\bigcap \mc E'|<{\rho}$ for each $\mc E'\in {[\mc E]}^{k}$.

If  
${\lambda}$ and ${\rho}$ are  cardinals, $S$ is a set of cardinals, 
$k\in {\omega}$, then we write
\begin{displaymath}
\MVC{{\lambda}}{S}{k}{{\mu}}
\end{displaymath}
iff
every hypergraph  
$\mc E\subs {[{\lambda}]}^{S}$  possessing  property $\cad{k}{{\rho}}$ has a minimal vertex cover. 
If $S=\{\kappa\}$, then we simply write $\mathbf M({{\lambda}},{\kappa},{k},{{\mu}})\to  \mathbf{MinVC}$
for $\mathbf M({{\lambda}},\{{\kappa}\},{k},{{\mu}})\to  \mathbf{MinVC}$. 

A set $S$ of cardinals is {\em nowhere stationary} iff 
    $S\cap {\alpha}$ is not stationary  in ${\alpha}$ for any ordinal ${\alpha}$ with $cf({\alpha})>{\omega}$.
In particular, countable sets of cardinals, and sets of successor cardinals 
 are nowhere stationary.
 
In this paper we prove:
\begin{enumerate}[(1)]
\item $\MVC{{\lambda}}{S}{2}{k}$ for each 
 nowhere stationary set $S$ of cardinals and 
$k<{\omega}\le {\lambda}$ and $k\in {\omega}$,  \smallskip
\item $\MVC{{\lambda}}{{\kappa}}{2}{{\omega}}$ for each 
${\omega}_2\le {\kappa}\le {\lambda}$ provided $GCH$ holds,\smallskip
\item $\MVC{{\lambda}}{{\kappa}}{2}{{\rho}}$ provided 
${\rho}<\beth_{\omega}\le {\kappa}\le {\lambda}$,  \smallskip
\item $\MVC{{\lambda}}{{\omega}}{r}{k}$
provided ${\omega}\le {\lambda}$ and $k,r\in {\omega}$,  \smallskip
\item $\MVC{{\lambda}}{{\omega}_1}{3}{k}$
provided ${\omega}_1\le {\lambda}$ and $k\in {\omega}$.
\end{enumerate}

\end{abstract}

\maketitle

\section{Introduction}

A {\em hypergraph} is a pair of sets $\<V,\mc E\>$ such that 
the elements of $\mc E$ are non-empty subsets of $V$.
We are interested in hypergraphs $\<V,\mc E\>$ whose edges cover $V$ in 
the sense that 
$V = \cup \mc E$. To shorten notations, 
in this case the hypergraph
 will be identified with the family $\mc E$ of its edges. 

Given a hypergraph $\mc E$ 
a vertex set $Y\subs \cup \mc E$ is a {\em vertex cover} of $\mc E$ iff
$Y\cap E\ne \empt$ for each $E\in \mc E$.
We say that a vertex cover  $Z$ is a {\em minimal vertex cover} of $\mc E$
iff  no proper subset of $Z$ is a vertex cover of $
\mc E$.

Using the terminology of Erd\H os and Hajnal \cite{EH} we say that 
a hypergraph $\mc E$  possesses  {\em property $\cad{k}{\rho}$} iff
 $|\bigcap \mc E'|<{\rho}$ for each $\mc E'\in {[\mc E]}^{k}$.

 Dominic van der Zypen  \cite{BaZy,Zy} raised  the following question:
{\em  Assume that $r\in {\omega}$. Does every hypergraphs which possesses property $\cad{2}{r}$ have 
 a minimal vertex cover?}

 In \cite{Ko} Komjáth gave some partial answers. 
 To formulate them precisely we introduce some more notations.

We define the {\em edge cardinality spectrum}
of a hypergraph $\mc E$, $\|\mc E\|$, as the cardinalities of 
the edges of the hypergraph:
\begin{displaymath}
    \|\mc E\|=\{|E|:E\in \mc E\}.
\end{displaymath}

If $A$ is a set and $S$ is a set of cardinals,
write
\begin{displaymath}
{[A]}^{S}=\{B\subs A: |B|\in S\}.
\end{displaymath}
If $S=\{{\kappa}\}$, then we will  use the standard notation ${[A]}^{{\kappa}}$ instead of
${[A]}^{\{{\kappa}\}}$.

If  
${\lambda}$ and ${\rho}$ are  cardinals, $S$ is a set of cardinals, 
$k\in {\omega}$, then we write
\begin{displaymath}
\MVC{{\lambda}}{S}{k}{{\rho}}
\end{displaymath}
iff
every hypergraph  
$\mc E\subs {[{\lambda}]}^{S}$  possessing  property $\cad{k}{{\rho}}$ has a minimal vertex cover.

Komjath (\cite[Corollary 8]{Ko}) proved that 
\begin{enumerate}[(k1)]
    \item  
    $\MVC{{\lambda}}{{\omega}\cup\{{\kappa}\}}{2}{k}$ for each 
    ${\omega}\le {\kappa}\le {\lambda}$.
    \end{enumerate}

In this paper  we prove a strengthening of this result. To formulate our theorem we need to introduce
the following notion. 
A set $S$ of ordinals is {\em nowhere stationary} iff 
    $S\cap {\alpha}$ is not stationary  in ${\alpha}$ for any ordinal ${\alpha}$ with $cf({\alpha})>{\omega}$.
In particular, countable sets of cardinals, and sets of successor cardinals 
 are nowhere stationary.  
 In Section \ref{sc:inhomogeneous} we prove:
 \begin{theorem}\label{tm:nowhere-stac} 
    $\MVC{{\lambda}}{S}{2}{r}$ for each cardinal ${\lambda}$ and $r\in {\omega}$
provided   that $S$ is a nowhere stationary set of cardinals.
 \end{theorem}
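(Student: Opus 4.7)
The plan is to construct a minimal vertex cover of $\mc E$ by transfinite recursion, simultaneously building a partial cover $Z_\xi\subs\lambda$ and an injective witness map $w_\xi\colon Z_\xi\to\mc E$ satisfying $z\in w_\xi(z)$ and $w_\xi(z)\cap Z_\xi=\{z\}$ for every $z\in Z_\xi$. If these invariants are preserved throughout and $Z=\bigcup_\xi Z_\xi$ meets every edge, then $Z$ is the desired minimal vertex cover: removing any $z\in Z$ would leave $w(z)$ uncovered.

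Before the main recursion I would reduce to the case $S\cap\omega=\empt$. On the sub-hypergraph of finite edges, the poset of vertex covers under reverse inclusion satisfies the hypothesis of Zorn's lemma: for any decreasing chain of vertex covers and any finite edge $E$, the traces $Z_\alpha\cap E$ form a decreasing chain of nonempty finite subsets of $E$, whose intersection is nonempty; hence the intersection of the chain is again a vertex cover and a minimal one exists. Applying this to the finite-edge part of $\mc E$ reduces the problem to the infinite-edge part.

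For the main recursion I would enumerate $\mc E=\{E_\xi:\xi<\theta\}$ using a continuous $\in$-chain of elementary submodels $\langle M_\alpha:\alpha<\theta\rangle$ of a large $H(\chi)$ containing $\mc E$, $\lambda$ and $S$, ordering the edges by first appearance in the chain. At stage $\xi$, if $E_\xi\cap Z_\xi=\empt$, attempt to choose
\[
z_\xi\in E_\xi\setm\bigcup_{\eta\in C\cap\xi}E_\eta,
\]
where $C$ is the set of earlier stages at which a vertex was added, and declare $w(z_\xi)=E_\xi$. Property $\cad{2}{r}$ bounds the forbidden set by $(r-1)\cdot|C\cap\xi|$, so the selection succeeds as long as $(r-1)\cdot|C\cap\xi|<|E_\xi|$; and the invariant $w(z_\xi)\cap Z=\{z_\xi\}$ is automatically preserved by later stages, since future $z_\eta$ are themselves chosen outside $E_\xi$.

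The main obstacle is ensuring that the selection step never fails, i.e., that $|C\cap\xi|$ stays strictly below $|E_\xi|$ at every stage. Here the nowhere-stationary hypothesis on $S$ is indispensable: for each limit $\alpha<\theta$ with $\cf(\alpha)>\omega$, $S\cap\alpha$ is non-stationary in $\alpha$, giving a club $D_\alpha\subs\alpha$ disjoint from $S$. I would use these clubs to tune the submodel chain so that, along a club in $\theta$, the cardinalities $|M_\alpha|$ avoid $S$; each edge $E_\xi$ of size $\kappa\in S$ then first appears at a stage whose index is strictly below $\kappa$ in the relevant model, keeping $|C\cap\xi|<|E_\xi|$ throughout. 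Coordinating these local clubs from all relevant cofinalities into a single coherent global enumeration, and verifying the counting step at each stage, is the technical heart of the proof.
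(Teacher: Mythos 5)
There is a genuine gap at the heart of your scheme: the greedy ``one new vertex per uncovered edge, chosen off the union of all previously used edges'' step requires that at every stage the forbidden part $E_\xi\cap\bigcup_{\eta\in C\cap\xi}E_\eta$ (of size up to $(r-1)\cdot|C\cap\xi|$) fail to exhaust $E_\xi$, and nothing in your set-up guarantees this once $S$ contains more than one infinite cardinal. The problem is not where an edge \emph{first appears} in the submodel chain -- which is all that your clubs avoiding $S$ can control -- but \emph{how many edges precede it}. Already for $S=\{\omega_1,\omega_2\}$ and $\lambda=\omega_2$: by the time you reach an edge $E$ of size $\omega_1$ sitting in the block $M_{\alpha+1}\setm M_\alpha$, the set $C$ may contain $|\alpha|=\omega_1$ many previously treated edges of size $\omega_2$; these are not confined to $M_\alpha$ (they meet every block), each may meet $E$ in up to $r-1$ points, and $\cad{2}{r}$ permits their union to cover all of $E\setm M_\alpha$, so the selection fails. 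More generally a block of size $\mu$ can contain $\mu$ many new edges of size $\kappa<\mu$, and then most of them are preceded by at least $\kappa$ earlier edges no matter how you order the block. Nowhere-stationarity of $S$ cannot repair this counting obstruction; in the paper it is consumed for a different purpose entirely.

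The paper's proof is instead an induction on $\kappa=\sup S$. Only the edges of full size $\kappa$ receive the one-point greedy treatment; all smaller edges that become trapped in a block $M_{\zeta+1}\setm M_{\zeta}$ are covered \emph{wholesale}, by applying the induction hypothesis for the cardinal $|M_{\zeta+1}|$ to a localized hypergraph $\mc B^-_{\zeta}$ living on $(M_{\zeta+1}\setm M_{\zeta})\setm(C_{\eta}\cup C^w_{\le\eta})$, i.e.\ after trimming away the big edges already used as witnesses. Making this work requires exactly the control your sketch lacks: condition \eqref{cinside} forces each used big edge to absorb its intersection with every still-uncovered small edge into the current $M_{\zeta}$, and the Claim deduces that the trimming removes fewer than $|B|$ points from each surviving small edge $B$, so that the localized edge-cardinality spectrum is still nowhere stationary and the induction hypothesis applies. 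Separately, your preliminary ``reduction to $S\cap\omega=\empt$'' is not a reduction: a union of a minimal vertex cover of the finite-edge part and one of the infinite-edge part need not be minimal, since a witness edge for a vertex of one part may meet the cover chosen for the other; the two parts must be built jointly (as in the paper, where the finite edges are simply members of $\mc B$) or with explicit avoidance constraints that your Zorn argument does not supply.
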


Komjáth (\cite[Theorems 16 and 17]{Ko}) also  proved
\begin{enumerate}[(k1)]\addtocounter{enumi}{1}
\item $\MVC{{\lambda}}{{\omega}\cup\{{\kappa}\}}{2}{{\omega}}$ for each 
${\omega}_1\le {\kappa}\le {\lambda}$ provided $V=L$, \smallskip
\item GCH does not imply that 
$\MVC{\aleph_{{\omega}+1}}{{\omega}_1}{2}{{\omega}}$. 
\end{enumerate}

In Section \ref{sc:mwo} we show that the assumption $V=L$ can be relaxed to 
$GCH$ provided  we strengthen the  assumption  ${\kappa}\ge {\omega}_1$ to  
${\kappa}\ge {\omega}_2$.   
    
 \begin{theorem}\label{tm:mvc-gch}
    If GCH holds, then $\MVC{{\lambda}}{{\omega}\cup\{{\kappa}\}}{2}{{\omega}}$
    for each ${\omega}_2\le {\kappa}\le {\lambda}$.
    \end{theorem}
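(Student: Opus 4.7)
The strategy is to construct a minimal vertex cover $Y$ together with an injection $f\colon Y\to \mc E$ such that $y\in f(y)$ and $f(y)\cap Y=\{y\}$ for every $y\in Y$ (which is equivalent to $Y$ being minimal, since $f(y)$ is then a private edge for $y$), by transfinite recursion along a continuous elementary-submodel filtration, with a $\diamondsuit$-type guessing principle handling the limit stages. The threshold ${\kappa}\ge {\omega}_2$ is exactly explained by Shelah's theorem that $2^{\mu}={\mu}^+$ implies $\diamondsuit_{{\mu}^+}$ for every ${\mu}\ge {\omega}_1$: under $GCH$ this delivers $\diamondsuit_{{\kappa}}$ at every successor cardinal ${\kappa}\ge {\omega}_2$, whereas $GCH$ does not in general guarantee $\diamondsuit_{{\omega}_1}$, which is why Komj\'ath had to assume $V=L$ in statement $(k2)$ above.

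Split $\mc E=\mc F\cup \mc K$ into its finite and ${\kappa}$-sized parts. Property $\cad{2}{{\omega}}$ forces any two edges to meet in a finite set, so $\mc K$ is a finitary almost-disjoint family of ${\kappa}$-subsets of ${\lambda}$. Fix a sufficiently large regular cardinal ${\theta}$ and, in the subcase where ${\kappa}$ is regular, a continuous elementary chain $\<M_{\alpha}:{\alpha}<{\kappa}\>$ of submodels of $H({\theta})$ with $\mc E,{\lambda}\in M_0$, $M_{\alpha}\in M_{{\alpha}+1}$, and $|M_{\alpha}|<{\kappa}$; the singular case reduces to the regular one after decomposing $\mc K$ along a cofinal sequence of regular cardinals in ${\kappa}$. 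The recursion produces increasing pairs $(Y_{\alpha},f_{\alpha})$ with $Y_{\alpha}\subs M_{\alpha}\cap {\lambda}$ and $f_{\alpha}\colon Y_{\alpha}\to \mc E\cap M_{\alpha}$ injective with the private-edge property, maintaining the invariant that every edge in $\mc E\cap M_{\alpha}$ either lies in $\ran(f_{\alpha})$ or is already met by $Y_{\alpha}$.

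At a successor stage ${\alpha}+1$, each new uncovered edge $E\in M_{{\alpha}+1}\setm M_{\alpha}$ is assigned a private representative $v\in E$ lying outside the forbidden set $\bigcup\{f_{\alpha}(y):f_{\alpha}(y)\cap E\ne \empt\}$; by property $\cad{2}{{\omega}}$ this is a union of at most $|Y_{\alpha}|<{\kappa}$ finite sets, so when $|E|={\kappa}$ there is ample room, and the finite edges in $M_{{\alpha}+1}$ are handled by a Hall-style SDR argument inside the model.

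The main obstacle lies at the limit stages. At a limit ${\alpha}$ a ${\kappa}$-edge $E$ may first belong to $M_{\alpha}$ while its \emph{free} portion $E\setm \bigcup_{{\beta}<{\alpha}}\ran(f_{\beta})$ has been exhausted --- each past private edge meets $E$ only finitely, but the cumulative collection can nonetheless swallow all of $E$. The remedy is to use $\diamondsuit_{{\kappa}}$ to predict, on a stationary set of limits, the ${\kappa}$-edges first appearing there, and to pre-reserve their private vertices at earlier stages. The heart of the argument will be verifying that the reserved vertices survive the cumulative commitments; this verification uses the $GCH$ bounds on the number of local configurations the diamond sequence must hit, and it is here that the hypothesis ${\kappa}\ge{\omega}_2$ is genuinely employed.
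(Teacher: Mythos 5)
Your outline has a genuine gap at the exact point where the theorem is hard, and the scaffolding around it does not fit the statement being proved. First, the filtration $\<M_{\alpha}:{\alpha}<{\kappa}\>$ with $|M_{\alpha}|<{\kappa}$ cannot exhaust a vertex set of size ${\lambda}$ when ${\lambda}>{\kappa}$, so your recursion never sees most of the hypergraph; the induction has to be organized along ${\lambda}$ (the paper does this via a ``good cut'' of $\bigcup\mc E$ into $\cf({\lambda})$ pieces of size $<{\lambda}$ and a general stepping-up theorem). Second, the appeal to $\diamondsuit_{\kappa}$ only covers successor ${\kappa}\ge{\omega}_2$; for limit ${\kappa}$ (regular or singular) the cited Shelah result gives nothing, and ``decomposing $\mc K$ along a cofinal sequence of regular cardinals in ${\kappa}$'' is not meaningful for a family whose edges all have size exactly ${\kappa}$. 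Third, and most importantly, the step you yourself identify as ``the heart of the argument'' --- that the reserved vertices survive the cumulative commitments --- is precisely the content of the theorem and is left unverified; a $\diamondsuit$-sequence guesses edges, but you give no argument that the guessed reservations are compatible with the $<{\kappa}$ (or up to ${\lambda}$) many finite traces already committed, nor any reason the finite edges admit an SDR consistent with the infinite ones.

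For comparison, the paper's proof does not use any guessing principle. It proves the stronger statement that such hypergraphs admit a \emph{maximizing well-ordering} (every edge has a $\preceq$-maximal element), which by Klimo's argument yields $\MWOTEXT\to\MVCTEXT$. The place where $GCH$ and ${\kappa}\ge{\omega}_2$ enter is a lemma of Hajnal--Juh\'asz--Soukup--Szentmikl\'ossy: if ${\lambda}\ge{\omega}_2$, ${\mu}^{\omega}={\mu}^+$ below ${\lambda}$, and $\mc A$ has property $\cad{2}{{\omega}}$, then for any $X$ with $|X|<{\lambda}$ at most $|X|$ edges meet $X$ in an uncountable set. This produces a continuous increasing sequence $\<G_{\alpha}:{\alpha}<\cf({\lambda})\>$ such that every edge $A$ is contained in some $G_{{\alpha}+1}$ with $|A\cap G_{\alpha}|<|A|$, and the stepping-up theorem then reduces everything to the base case $|\bigcup\mc A|={\kappa}$, where $|\mc A|\le{\kappa}$ and one simply well-orders the disjointified pieces $A_{\zeta}\setm A_{<{\zeta}}$ in type ${\kappa}+1$ each. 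If you want to salvage your approach, the statement you must isolate and prove is an analogue of that counting lemma; without it the limit-stage obstruction you describe is not resolved.
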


 One can conjecture that one can not obtain positive ZFC theorems 
 for hypergraphs which possesses property $\cad{2}{{\rho}}$ for some ${\rho}\ge {\omega}$. 
 But this is not true: in Section \ref{sc:mwo},   
using Shelah's Revised CGH theorem,   we prove in ZFC: 
    \begin{theorem}\label{tm:mvc-bo}
    $\MVC{{\lambda}}{{\omega}\cup\{{\kappa}\}}{2}{\rho}$
        for each ${\rho}<\beth_{\omega}\le {\kappa}\le {\lambda}$.
    \end{theorem}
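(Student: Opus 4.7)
The plan is to mirror the GCH-based proof of Theorem~\ref{tm:mvc-gch}, substituting Shelah's Revised GCH applied at $\mu:=\beth_{\omega}$ for the explicit cardinal arithmetic of GCH. Fix a hypergraph $\mc E\subs{[\lambda]}^{\omega\cup\{\kappa\}}$ with property $\cad{2}{\rho}$, where $\rho<\mu\le\kappa\le\lambda$, and split $\mc E$ into the family $\mc F$ of finite edges and the family $\mc G$ of $\kappa$-sized edges.

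First I would invoke Shelah's Revised GCH at the strong limit singular cardinal $\mu=\beth_{\omega}$ to extract a family $\mc P\subs{[\lambda]}^{<\mu}$ of cardinality $\lambda$ together with a bound $\theta<\mu$ such that every $A\in{[\lambda]}^{<\mu}$ is covered by fewer than $\theta$ members of $\mc P$. Since $\rho<\mu$, every pairwise intersection $E\cap E'$ with $E,E'\in\mc G$, being of size $<\rho$, admits such a covering; hence the ``interaction pattern'' of any single edge with the rest of $\mc E$ is coded by a modestly sized sub-family of $\mc P$. Next I would fix a continuous increasing tower $\<N_\alpha:\alpha<\lambda\>$ of elementary submodels of $H(\chi)$ for some regular $\chi$ large enough, with $|N_\alpha|<\lambda$, $N_\alpha\cap\lambda$ an ordinal, $\{\mc E,\mc P,\rho,\kappa\}\subs N_0$, and $\bigcup_{\alpha<\lambda}(N_\alpha\cap\lambda)=\lambda$; then $N_\alpha\cap\mc P$ controls all the small intersections visible in $N_\alpha$.

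The core would be a transfinite recursion on $\alpha<\lambda$ selecting a witness $v_E\in E$ for each edge $E\in\mc E$, with the invariant that $Y_\alpha:=\{v_E:E\in\mc E\cap N_\alpha\}$ is a minimal vertex cover of $\mc E\cap N_\alpha$, i.e. $E\cap Y_\alpha=\{v_E\}$ for every $E\in\mc E\cap N_\alpha$. At a successor step, when $E\in N_{\alpha+1}\setminus N_\alpha$ appears: if $E\in\mc F$ then property $\cad{2}{\rho}$ together with the $\mc P$-coding of the prior intersections forbids only a bounded (in fact $<\mu$-sized) portion of $E$, leaving a legal $v_E$; if $E\in\mc G$ then $|E\cap N_\alpha|<\mu\le\kappa$, so $E\setminus N_\alpha$ still has cardinality $\kappa$ and a fresh $v_E$ is available. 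Limit stages pass by continuity. Setting $Y=\bigcup_{\alpha<\lambda}Y_\alpha$, every edge gets processed and each $v_E$ remains private, so $Y$ is a minimal vertex cover.

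I expect the main obstacle to be preserving the commitment $E\cap Y=\{v_E\}$ across later stages: once $v_E$ enters $Y$, any subsequent edge $E'$ that happens to contain $v_E$ must have its witness $v_{E'}$ chosen so as not to inject any new element of $Y$ into $E\setminus\{v_E\}$. Shelah's Revised GCH is exactly the tool that caps the number of distinct ``intersection types'' of edges relative to the tower $\<N_\alpha\>$ by $\lambda$, ensuring that the $\lambda$-long recursion genuinely exhausts $\mc E$ and that the local choices made inside each $N_\alpha$ remain coherent globally; without such a bound one cannot in general close the construction, as already hinted by clause (k3) of the introduction.
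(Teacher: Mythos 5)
There is a genuine gap, and in fact the central invariant of your recursion is unachievable. You propose to choose a witness $v_E\in E$ for \emph{each edge} $E$ and to maintain that $E\cap Y_\alpha=\{v_E\}$ for every $E\in\mc E\cap N_\alpha$. That is not what minimality of a vertex cover means, and it cannot be arranged in general: the correct characterization (the Observation in the introduction) is that every \emph{point} $y\in Y$ has a private edge $w(y)$ with $w(y)\cap Y=\{y\}$, not that every edge meets $Y$ in a single point. Already the three finite edges $\{0,1\},\{0,2\},\{1,2\}$ lie in ${[{\lambda}]}^{{\omega}\cup\{{\kappa}\}}$ and have property $\cad{2}{{\rho}}$, yet any vertex cover meets one of them in two points, so your invariant fails at the very first nontrivial stage. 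Relatedly, your handling of finite edges is vacuous: asserting that only a ``$<\mu$-sized portion of $E$'' is forbidden says nothing when $E$ itself is finite. The paper avoids this entirely by routing the proof through maximizing well-orders: finite edges have a maximum in \emph{any} well-order, so only the ${\kappa}$-sized edges need work, and Klimo's implication $\MWOTEXT\to\MVCTEXT$ then produces the minimal cover.

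Even for the ${\kappa}$-sized edges, where the idea of a long recursion is in the right spirit, the proposal does not contain the step where Shelah's Revised GCH actually does its job. The paper's argument is a reduction to Theorem \ref{tm:maxwo-bo}, proved via the stepping-up Theorem \ref{tm:gen-step-up}(2): one needs a \emph{good cut} $\<G_{\zeta}:{\zeta}<\cf({\lambda})\>$ such that each edge $A$ lands inside some $G_{{\zeta}+1}$ with $|A\cap G_{\zeta}|<|A|$, and this localization is exactly Lemma \ref{lm:loc_small} (from \cite{So}), whose proof rests on the Revised GCH. Your sketch replaces this with an unproven assertion that a covering family $\mc P$ ``codes the interaction pattern'' and ``caps the number of intersection types by ${\lambda}$''; no argument is given for why only few edges can have large intersection with a given set of size $<{\lambda}$, which is the quantitative fact the whole construction hinges on. Without that lemma (or an equivalent), neither the exhaustion of $\mc E$ nor the coherence of the local choices follows.
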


In Section \ref{sc:ckr} we consider hypergraphs possessing property $\cad{k}{r}$
for $3\le k<{\omega}$, and we prove the following results.

\begin{theorem}\label{tm:ckr}
    $\MVC{{\lambda}}{{\omega}}{r}{k}$
    provided ${\omega}\le {\lambda}$ and $k,r\in {\omega}$
\end{theorem}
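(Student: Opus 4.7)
The plan is to construct a minimal vertex cover via transfinite recursion on an enumeration of $\mc E$. Fix an enumeration $\mc E=\{E_\alpha:\alpha<\mu\}$ and build increasing sets $Y_\alpha\subseteq\bigcup\mc E$ together with injective partial functions $\phi_\alpha\colon Y_\alpha\to\mc E$ satisfying the invariants
\begin{enumerate}
\item[(i)] $y\in\phi_\alpha(y)$ and $\phi_\alpha(y)\cap Y_\alpha=\{y\}$ for each $y\in Y_\alpha$ (privacy);
\item[(ii)] $E_\beta\cap Y_\alpha\ne\empt$ for each $\beta<\alpha$ (coverage of processed edges).
\end{enumerate}
At stage $\alpha$, if $E_\alpha$ is already covered by $Y_\alpha$, set $Y_{\alpha+1}=Y_\alpha$; otherwise pick $y_\alpha\in E_\alpha\setm\bigcup_{y\in Y_\alpha}\phi_\alpha(y)$, put $Y_{\alpha+1}=Y_\alpha\cup\{y_\alpha\}$, and define $\phi_{\alpha+1}(y_\alpha)=E_\alpha$; take unions at limits. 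The final $Y=\bigcup_\alpha Y_\alpha$ with $\phi=\bigcup_\alpha\phi_\alpha$ is then the desired minimal vertex cover, with $\phi(y)$ serving as the private edge witnessing that $y\in Y$ cannot be removed.

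The key technical point is to verify that the greedy step always succeeds, i.e., $E_\alpha\not\subs\bigcup_{y\in Y_\alpha}\phi_\alpha(y)$ whenever $E_\alpha\cap Y_\alpha=\empt$. For $r=2$ (settled by Komj\'ath), property $\cad{2}{k}$ gives the bound $|\phi_\alpha(y)\cap E_\alpha|<k$ for every predecessor $y$, and one arranges the enumeration so that only finitely many prior private edges meet $E_\alpha$ nontrivially, leaving fresh vertices in the countable set $E_\alpha$. For $r\ge 3$ this approach fails because pairwise intersections can be countably infinite. For instance, if $B_1,B_2,\ldots$ are pairwise disjoint infinite subsets of $\omega$ with $0,i\notin B_i$, the hypergraph $\mc E=\{\omega\}\cup\{\{0,i\}\cup B_i:i\ge 1\}$ satisfies $\cad{3}{2}$, yet processing the edges $F_i=\{0,i\}\cup B_i$ first (each contributing some $b_i\in B_i$ to $Y$ as its private vertex) leaves $\omega$ uncovered and entirely contained in $\bigcup_i F_i$. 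The remedy is to determine the enumeration dynamically, selecting at each stage an uncovered edge that admits a fresh vertex (equivalently, ``postpone'' dense edges like $\omega$ at one's peril --- so process them early whenever the opportunity arises).

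The main obstacle is then to prove that such an edge always exists so long as $Y_\alpha$ is not yet a full vertex cover. The key structural input from $\cad{r}{k}$ is that every $k$-subset of $\bigcup\mc E$ is contained in at most $r-1$ edges of $\mc E$: for if $r$ edges all contained some fixed $k$-set $K$, their $r$-fold intersection would have size $\ge k$, contradicting $\cad{r}{k}$. Combining this bound with a $\Delta$-system (sunflower) extraction \emph{inside} a putative uncovered candidate edge $E$ with $E\subs\bigcup\phi_\alpha(Y_\alpha)$, one argues by contradiction: by pigeonhole inside the countable $E$ one produces $r$ prior private edges $\phi_\alpha(y_1),\ldots,\phi_\alpha(y_r)$ whose $r$-fold intersection with $E$ has size $\ge k$, contradicting $\cad{r}{k}$. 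The delicate part will be executing the extraction so the $r$-fold (and not merely the pairwise) intersection is large; since $|E|=\omega$, this reduces to a finite combinatorial problem, to be handled by iteratively restricting to sub-families of predecessors along a chain of length $r-1$ while maintaining that the running intersection with $E$ has size at least $k$. Limit stages of the outer recursion require no extra work since both privacy and coverage are preserved under increasing unions.
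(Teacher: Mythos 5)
Your greedy scheme breaks at exactly the point you flag as the main obstacle: the lemma ``as long as $Y_\alpha$ is not yet a vertex cover, some uncovered edge admits a fresh vertex'' is false, even for runs that obey your dynamic selection rule and even in the simplest case $\cad{2}{2}$. Take the vertex set $\omega\times\omega$ with edges the rows $R_i=\{i\}\times\omega$ and the columns $C_j=\omega\times\{j\}$; any two distinct edges meet in at most one point. Processing $R_0,R_1,R_2,\dots$ with private vertices $(0,1),(1,2),(2,3),\dots$ is a perfectly legal run of your procedure (at each finite stage the chosen row is uncovered and entirely fresh, and privacy is preserved), yet after $\omega$ steps the unique uncovered edge is $C_0$, and $C_0\subs\bigcup_i R_i=\bigcup_{y\in Y_\omega}\phi_\omega(y)$, so the process is stuck; since you never reassign witnesses, no continuation exists. (A minimal vertex cover does exist --- the diagonal --- but the stage-by-stage rule cannot find it without global foresight.) The pigeonhole/sunflower argument you propose to prove the lemma is also unsound: property $\cad{r}{k}$ bounds only $r$-fold intersections, and a countably infinite uncovered edge $E$ can be covered by infinitely many prior private edges each meeting $E$ in fewer than $k$ points --- in the example each $R_i$ meets $C_0$ in a single point --- so one cannot extract $r$ of them with a large common intersection. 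Your observation that every $k$-subset lies in at most $r-1$ edges is correct, but it controls closure, not coverage.

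What is missing is a globally coherent strategy in place of a local selection rule. The paper proves the stronger statement that such hypergraphs admit a maximizing well-ordering of the vertex set (every edge has a largest element), from which a minimal vertex cover follows by Klim\'o's observation. This is done in two steps: a reduction to countable hypergraphs via a ``good cut'', i.e. a continuous increasing chain of subsets closed under the map sending each $k$-set to the (at most $r-1$) edges containing it, so that every edge is swallowed with small trace at some successor stage; and then, for countable hypergraphs, an induction on the intersection parameter using the disjointification $B_n=A_n\setm\bigcup_{i<n}A_i$, ordering each block so that all relevant traces acquire maxima. Neither ingredient appears in your sketch, and the countable case is itself the nontrivial combinatorial core; the greedy construction does not substitute for it.
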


\begin{theorem}\label{tm:c3k}
    $\MVC{{\lambda}}{{\omega}_1}{3}{k}$ 
    provided ${\omega}_1\le {\lambda}$ and $k\in {\omega}$.
\end{theorem}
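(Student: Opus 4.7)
I would construct the minimal vertex cover $Z$ by transfinite recursion on an enumeration $\mathcal E = \{E_\xi : \xi < \mu\}$, together with a witness function $w : Z \to \mathcal E$. The invariants are that $Z_\xi$ hits $E_\eta$ for every $\eta < \xi$, and for each $v \in Z_\xi$ one has $v \in w_\xi(v)$ and $w_\xi(v) \cap Z_\xi = \{v\}$. At the successor step, if $E_\xi \cap Z_\xi \ne \emptyset$ no action is needed; otherwise, since $E_\xi$ itself is disjoint from $Z_\xi$, it may serve as a witness, and the task reduces to finding a vertex $v \in E_\xi$ outside the danger zone $D_\xi := \bigcup_{u \in Z_\xi} w_\xi(u)$. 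Setting $Z_{\xi+1} = Z_\xi \cup \{v\}$ and $w_{\xi+1}(v) = E_\xi$ then preserves the invariants; limits are taken as unions and the witness data propagates automatically.

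The heart of the matter is showing $E_\xi \not\subseteq D_\xi$ at every stage. Property $\cad{3}{k}$ gives, for distinct $u, u' \in Z_\xi$, the bound $|w_\xi(u) \cap w_\xi(u') \cap E_\xi| < k$, since the three edges $w_\xi(u), w_\xi(u'), E_\xi$ are pairwise distinct---the witness function is injective by the invariant, and neither $w_\xi(u)$ equals $E_\xi$ because their intersections with $Z_\xi$ differ. Hence the family $\{w_\xi(u) \cap E_\xi : u \in Z_\xi\}$ is a $\cad{2}{k}$-family on the $\omega_1$-sized set $E_\xi$.

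The main obstacle is that a bare $\cad{2}{k}$-family on $\omega_1$ can partition $\omega_1$, so cardinal counting alone does not certify $D_\xi \cap E_\xi \ne E_\xi$. To close this gap I would couple the recursion with a continuous elementary $\in$-chain $(M_\alpha)_{\alpha < \omega_1}$ of countable submodels of $H(\chi)$ with $\mathcal E \in M_0$, processing edges in an order compatible with the chain and choosing each witness from within the current $M_\alpha$ by elementarity. When $E_\xi \notin M_\alpha$, only countably many prior traces contribute to $D_\xi \cap E_\xi$; the resulting countable $\cad{2}{k}$-family inside the $\omega_1$-sized $E_\xi$ has, by Theorem~\ref{tm:nowhere-stac} applied to the restriction (whose hypotheses are met because the edge cardinalities $\le \omega_1$ form a nowhere stationary set of cardinals), a minimal vertex cover on $E_\xi$, and the witness structure of this inner cover should point to a vertex of $E_\xi$ outside $D_\xi$.

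I expect the bulk of the work to lie in the bookkeeping---scheduling edges so that at each stage the accumulated witnesses remain in the appropriate $M_\alpha$, and propagating the invariants through limit stages of both the edge enumeration and the $\in$-chain, where a genuine use of $\cad{3}{k}$ (through a finer near-disjointness estimate of the traces) seems to enter. A secondary point is the extension to $\lambda > \omega_1$, which should follow either by enlarging the submodels or by a localization argument confining the problem to an $\omega_1$-closed slice of the vertex set.
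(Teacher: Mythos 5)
Your reduction of the successor step to ``find $v\in E_\xi$ outside the danger zone $D_\xi$'' is where the argument breaks, and the submodel device you sketch does not repair it. Even if the scheduling guarantees that only countably many earlier witness edges meet $E_\xi$, their traces on $E_\xi$ form a family of pairwise $<k$-intersecting subsets of $E_\xi$ each of which may itself have size $\omega_1$, and countably many such sets can cover $E_\xi$ outright: take $E=\omega_1\times\{0\}$ and $A_n=(S_n\times\{0\})\cup\{(n,1)\}$ for a partition $\{S_n:n<\omega\}$ of $\omega_1$ into uncountable pieces; this family is $\cad{3}{1}$, and if each $A_n$ is processed before $E$ with witness vertex $(n,1)$ and witness edge $A_n$, then $E\cap Z_\xi=\emptyset$ while $E\setminus D_\xi=\emptyset$, so the greedy step is stuck (even though a minimal vertex cover exists). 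Nothing in elementarity forbids these choices. Moreover the sentence ``the witness structure of this inner cover should point to a vertex of $E_\xi$ outside $D_\xi$'' is a non sequitur: a minimal vertex cover of the trace family is a set \emph{meeting} every trace, and Theorem \ref{tm:nowhere-stac} gives no point of $E_\xi$ avoiding their union. So the existence of the required $v$ is exactly the open issue, and the proposal does not close it.

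The missing idea is the paper's preliminary shrinking. Enumerate $\mc D=\{D_\alpha:\alpha<\omega_1\}$ and pick $y_\alpha\in D_\alpha\setminus\bigcup\{D_\xi\cap D_\zeta:\xi<\zeta<\alpha\}$; this is possible because $\cad{3}{k}$ makes each $D_\alpha\cap D_\xi\cap D_\zeta$ finite, so the excluded set is countable while $|D_\alpha|=\omega_1$. Restricting to $Y=\{y_\alpha:\alpha<\omega_1\}$ yields traces $D_\alpha\cap Y$ that are not only $\cad{3}{k}$ but also $\cad{2}{\omega_1}$ (pairwise countable intersections), and a minimal vertex cover of the minimal elements of this restricted family pulls back to one for $\mc D$. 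It is precisely this additional almost-disjointness, absent from your setup, that tames the danger zone in the subsequent recursion along countable closed sets $M_\zeta$ (the paper's Lemma \ref{lm:ckrc2oo}), where the countable edges arising in each slice are handled by the countable case, Theorem \ref{tm:ckr}. Your remark on $\lambda>\omega_1$ is in the right spirit (the paper localizes via good cuts, Theorem \ref{tm:gen-step-up} together with Lemma \ref{lm:goodckr}), but without the shrinking step the core case $\lambda=\omega_1$ does not go through.
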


\subsection*{Notations and basic observations.}

We use the standard notation of infinite combinatorics.  

    If $\mc F\subs \mc P(X)$, $x\in X$ and $Y\subs X$, write 
  \begin{displaymath}
  \mc F(x)=\{F\in \mc F: x\in F\},
  \end{displaymath}
    \begin{displaymath}
    \met{\mc F}{Y}=\{F\in \mc F: F\cap Y\ne \empt\}\text{ and }
    \nmet {\mc F}{Y}=\{F\in \mc F: F\cap Y= \empt\},
    \end{displaymath}
and 
\begin{displaymath}
\mc F\restriction Y=\{F\cap Y: F\in \mc F[Y]\}
\text{ and }
\mc F\sqcap Y=\mc F\cap \mc P(Y).
\end{displaymath}
Observe that $\empt\notin \mc F\restriction Y$.

\begin{observation}
Assume that  $\mc E$ is a hypergraph and  $Y$ is a vertex cover of $\mc E$. 
Then     $Y$ is a minimal vertex cover  of $\mc E$ iff there is a function
    $w:Y\to \mc E$ such that $w(y)\cap Y=\{y\}$ for each $y\in Y$.    
\end{observation}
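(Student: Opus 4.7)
The plan is to prove both directions of the equivalence directly from the definition of minimality.

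For the forward direction, I would assume $Y$ is a minimal vertex cover and construct $w$ pointwise. Given $y \in Y$, the set $Y \setminus \{y\}$ fails to be a vertex cover by minimality, so there is some edge $E \in \mc E$ disjoint from $Y \setminus \{y\}$. Since $Y$ itself is a vertex cover, $E \cap Y \neq \emptyset$, and combining these observations yields $E \cap Y = \{y\}$. Setting $w(y) = E$ (using AC to make a simultaneous choice across all $y$) produces the required function.

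For the reverse direction, suppose such a $w$ exists. For any $y \in Y$, the edge $w(y)$ witnesses that $Y \setminus \{y\}$ is not a vertex cover, since $w(y) \cap (Y \setminus \{y\}) = (w(y) \cap Y) \setminus \{y\} = \{y\} \setminus \{y\} = \emptyset$. Because this holds for every $y \in Y$, no proper subset of $Y$ of the form $Y \setminus \{y\}$ covers $\mc E$, and hence no proper subset at all does. Thus $Y$ is minimal.

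There is no real obstacle here: the statement is essentially unpacking the definitions, and both implications are immediate once one notes that ``$Y \setminus \{y\}$ is not a vertex cover'' is precisely the assertion that some edge meets $Y$ only in $y$. The only minor point worth flagging is the use of the axiom of choice in the forward direction to assemble $w$ from the individual witnesses.
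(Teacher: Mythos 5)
Your proof is correct and is exactly the routine unpacking of the definitions that the authors had in mind; the paper states this as an \emph{Observation} without proof, and your two directions (including the monotonicity remark that failure of $Y\setminus\{y\}$ to cover implies failure of every subset of it, and the note on choice) fill in precisely the intended argument.
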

We will say that  $w$ is a {\em witnessing function}, or that $w$ {\em witnesses  the minimality 
of $Y$}. 

If $\<X_{\alpha}:{\alpha}<{\delta}\>$ is a sequence of sets, and ${\beta}<{\delta}$, write 
$X_{<{\beta}}=\bigcup_{{\alpha}<{\beta}}X_{\alpha}$.
We define $X_{\le {\beta}}$, $X_{>{\beta}}$, etc. analogously.

\section{Maximizing well ordering}\label{sc:mwo}

Given a hypergraph $\mc E$ a well-ordering $\preceq$ of the vertex set of $\mc E$ is called 
a {\em maximizing  well-ordering} iff every edge $E\in \mc E$ has a $\preceq$-maximal element.

If  
${\lambda}$ and ${\rho}$ are  cardinals, $S$ is a set of cardinals, 
$k\in {\omega}$, then 
we write
\begin{displaymath}
\MWO{{\lambda}}{S}{k}{\rho}
\end{displaymath}
iff
every hypergraph  
$\mc E\subs {[{\lambda}]}^{S}$  possessing  property $\cad{k}{{\rho}}$ has a maximizing well-ordering.

In \cite{Ko} Komjáth observed that Klimo \cite[Theorem 5]{Kl} practically  proved that 
 if a hypergraph has a maximizing well order then 
it has a minimal vertex cover, i.e. 
\begin{displaymath}\tag{$\dag$}\label{eq:NWONVC}
    \MWOTEXT\to \MVCTEXT
\end{displaymath}

In \cite[Theorem 7]{Ko} Komjáth actually proved that 
$\MWO{{\lambda}}{{\kappa}}{2}{k}$ for each 
    ${\omega}\le {\kappa}\le {\lambda}$.
So if $\mc E\subs {[{\lambda}]}^{{\omega}\cup \{{\kappa}\}}$, then 
$\mc E\cap {[{\lambda}]}^{{\kappa}}$ has a maximizing well-order $\preceq$,
which is also a maximizing well-order for $\mc E$ because non-empty finite sets
have maximal elements in any ordering.
 
Using the same approach to yield Theorems \ref{tm:mvc-gch} and \ref{tm:mvc-bo} it is enough to prove 
the following two theorems.
\begin{theorem}\label{tm:maxwo-gch}
    If GCH holds, then $\MWO{{\lambda}}{{\kappa}}{2}{{\omega}}$
    for each ${\omega}_2\le {\kappa}\le {\lambda}$.
    \end{theorem}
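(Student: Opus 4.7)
The plan is to prove $\MWO{{\lambda}}{{\kappa}}{2}{{\omega}}$ under GCH by induction on $\lambda \ge \kappa \ge \omega_2$. For $\lambda > \kappa$, a standard stratification along a continuous elementary chain reduces to smaller $\lambda$: edges of size $\kappa < \lambda$ are bounded in $\lambda$ when $\cf(\lambda) > \kappa$ and fit inside individual levels of the chain, and a block-partition handles $\cf(\lambda) \le \kappa$. So the essential work is the case $\lambda = \kappa$.

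At $\lambda = \kappa$, GCH gives $|\mc E| \le 2^\kappa = \kappa^+$. If $|\mc E| \le \kappa$, a direct recursion works. Enumerate $\mc E = \{E_\xi : \xi < \mu\}$ with $\mu \le \kappa$ and pick $v_\xi \in E_\xi \setminus \bigcup_{\eta < \xi} E_\eta$. Since by $\cad{2}{\omega}$ we have $E_\xi \cap \bigcup_{\eta<\xi} E_\eta = \bigcup_{\eta<\xi}(E_\xi \cap E_\eta)$, a union of $|\xi|$ finite sets, its cardinality is $|\xi|\cdot\omega < \kappa$ (using $\kappa \ge \omega_2 > \omega$), and the choice is possible. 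Defining $\prec$ by putting $V \setminus \{v_\xi : \xi<\mu\}$ first in any well-order and then $v_0 \prec v_1 \prec \cdots$, one checks that $v_\xi$ is the $\prec$-maximum of $E_\xi$, because $v_\eta \in E_\xi$ forces $\eta \le \xi$ by the avoidance rule.

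For the delicate sub-case $|\mc E| = \kappa^+$, take a continuous elementary chain $\<N_\alpha : \alpha < \kappa^+\>$ of submodels of $H(\theta)$ with $|N_\alpha| = \kappa$, $\mc E \in N_0$, and $\mc E \subs \bigcup_\alpha N_\alpha$. Each $E$ enters at a unique stage $\alpha_E$, giving layers $\mc F_\alpha = (\mc E \cap N_{\alpha+1}) \setminus N_\alpha$ of size at most $\kappa$. The plan is to process $\mc F_\alpha$ stage by stage, applying a version of the $|\mc E| \le \kappa$ recursion inside $N_{\alpha+1}$, and to concatenate into a single well-order by placing later-stage representatives above earlier ones. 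The main obstacle is cross-stage coherence at stages $\alpha \ge \kappa$: the naive rule ``$v_E$ avoids $\bigcup(\mc E \cap N_\alpha)$'' can fail, since this union may cover $E$ entirely. Overcoming this is where GCH and $\kappa \ge \omega_2$ enter essentially, either via a $\Delta$-system refinement of $\mc E$ extracting a $\kappa^+$-sized subfamily with finite root (by $\cad{2}{\omega}$) so that private representatives exist in the pairwise disjoint petals, or via a coherent elementarity argument exploiting the chain structure. The counterexample (k3) from the introduction shows that the argument must genuinely break down at $\kappa = \omega_1$, so any workable proof is forced to exploit the full strength of $\kappa \ge \omega_2$ at precisely this coherence step.
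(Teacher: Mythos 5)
There is a genuine gap, and it sits exactly where you flagged it: the sub-case $|\mc E|=\kappa^+$ at $\lambda=\kappa$. You describe the obstacle and then gesture at two possible fixes (a $\Delta$-system refinement, or ``a coherent elementarity argument''), but neither is carried out, and neither would work as stated: a $\Delta$-system argument only extracts one $\kappa^+$-sized subfamily with finite root and says nothing about how to order the remaining edges coherently with it, while the elementary-chain layering genuinely does break down, as you yourself observe. The missing idea is that this sub-case is \emph{vacuous}. The paper invokes a counting lemma (Lemma 8.5 of Hajnal--Juh\'asz--Soukup--Szentmikl\'ossy, quoted as Lemma \ref{lm:gen_small_close}): under the hypothesis that ${\mu}^{\omega}={\mu}^+$ for all ${\mu}$ of countable cofinality (a consequence of GCH), if $\mc A$ has property $\cad{2}{{\omega}}$ and $|X|<{\lambda}$ with ${\lambda}\ge{\omega}_2$, then $|\{A\in\mc A:|A\cap X|>{\omega}\}|\le|X|$. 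Applied with $X={\kappa}$, this gives $|\mc E|\le{\kappa}$ for every $\cad{2}{{\omega}}$ family $\mc E\subs{[{\kappa}]}^{{\kappa}}$ with ${\kappa}\ge{\omega}_2$; your ``easy'' case $|\mc E|\le{\kappa}$ is the only case, and your recursion for it (disjointify, give each block a maximum, concatenate) is essentially the paper's argument for the base case. This lemma is also precisely where ${\kappa}\ge{\omega}_2$ is used, which answers your closing remark about where the hypothesis must enter.

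A secondary weakness is the reduction from ${\lambda}>{\kappa}$ to ${\lambda}={\kappa}$. The paper formalizes this as a ``good cut'': a continuous increasing chain $\<G_{\alpha}:{\alpha}<cf({\lambda})\>$ such that every edge $A$ satisfies $A\subs G_{{\alpha}+1}$ and $|A\cap G_{\alpha}|<|A|$ for some ${\alpha}$, and then applies a general stepping-up theorem (Theorem \ref{tm:gen-step-up}(2)) to concatenate the maximizing well-orders of the layers. Constructing such a chain is not automatic when $cf({\lambda})\le{\kappa}$: one must close each $M_{\zeta}$ under absorbing all edges meeting it in an uncountable set without increasing its cardinality, and this again requires the same counting lemma. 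Your sketch (``bounded edges fit inside levels''; ``a block-partition handles $cf({\lambda})\le{\kappa}$'') does not supply the cardinality control needed to keep the levels small, nor the condition $|A\cap G_{\alpha}|<|A|$ that makes each layer again a $\cad{2}{{\omega}}$ family of ${\kappa}$-sized sets after restriction. So both halves of your outline ultimately hinge on the one lemma you are missing.
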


    \begin{theorem}\label{tm:maxwo-bo}
        $\MWO{{\lambda}}{{\kappa}}{2}{\rho}$
            for each ${\rho}<\beth_{\omega}\le {\kappa}\le {\lambda}$.
        \end{theorem}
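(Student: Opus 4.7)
The plan is to prove $\MWO{\lambda}{\kappa}{2}{\rho}$ by transfinite induction on $\lambda \geq \kappa \geq \beth_\omega$, with Shelah's Revised GCH (applied at $\beth_\omega$) supplying the cardinal arithmetic that is unavailable in ZFC without an instance of GCH. Fix $\mc E \subseteq {[\lambda]}^{\kappa}$ with property $\cad{2}{\rho}$, and assume without loss of generality that $\bigcup \mc E = \lambda$.

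For the step $\lambda > \kappa$, I would use a standard elementary-chain reduction. Take a continuous increasing chain $\<N_\alpha : \alpha < \cf(\lambda)\>$ of elementary submodels of $H(\chi)$ for suitably large regular $\chi$, with $\mc E,\kappa,\rho \in N_0$, $\kappa \subseteq N_\alpha$, $|N_\alpha| < \lambda$, and $\<N_\beta : \beta \le \alpha\> \in N_{\alpha+1}$; set $V_\alpha = N_\alpha \cap \lambda$. Because $\kappa \subseteq N_\alpha$, any edge $E \in N_\alpha$ satisfies $E \subseteq V_\alpha$, and the sub-hypergraph $\mc E \sqcap V_\alpha$ lives on a vertex set of cardinality $<\lambda$; the inductive hypothesis therefore provides a maximizing well-ordering. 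Concatenating these level by level, letting each edge $E$ take its maximum at the first stage at which it is absorbed, yields a maximizing well-ordering of $\lambda$.

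The heart of the argument is the base case $\lambda = \kappa$. Shelah's Revised GCH yields some $\sigma_0 < \beth_\omega$ and a family $\mc P \subseteq {[\kappa]}^{<\beth_\omega}$ of cardinality $\kappa$ such that every $X \in {[\kappa]}^{<\beth_\omega}$ is a union of at most $\sigma_0$ members of $\mc P$. Build a continuous elementary chain $\<N_\alpha : \alpha < \kappa\>$ of submodels of $H(\chi)$ of size $<\kappa$ with $\mc E, \mc P \in N_0$ and the usual closure properties, and set $V_\alpha = N_\alpha \cap \kappa$. The crucial feature is that every pairwise intersection $E \cap E'$ (of size $<\rho < \beth_\omega$) is covered by $<\beth_\omega$ elements of $\mc P$ and hence already lies in some $V_\alpha$. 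Using this, I would construct the witnessing function $f : \mc E \to \kappa$ and the well-order $\prec$ of $\kappa$ simultaneously by recursion on $\alpha$: at stage $\alpha$, each edge $E$ that has become ``processable'' (in the sense that its intersections with all previously handled edges already lie in $V_\alpha$) receives its designated maximum $f(E) \in V_{\alpha+1} \setminus V_\alpha$, placed in $\prec$ above every element of $E \cap V_\alpha$.

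The main obstacle is coordinating the choices of $f(E)$ across edges so that the induced relation $\{u \prec f(E) : u \in E \setminus \{f(E)\},\ E \in \mc E\}$ is well-founded and extends to a well-ordering of $\kappa$. Shelah's Revised GCH plays a dual role here: it supplies a $\kappa$-length filtration of $\kappa$ by submodels of size $<\kappa$ that ``see'' the structure of small intersections, and the scale match between $\rho < \beth_\omega$ and the cover family $\mc P$ ensures that each intersection $E \cap E'$ is absorbed at a single level of the chain. This bounds, at each stage, the number of previously made commitments with which the current choice of $f(E)$ can interfere, which is precisely what is needed to keep the recursion going without creating descending $\prec$-chains.
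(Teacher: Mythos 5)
Your overall skeleton (filter the vertex set by a continuous increasing chain of small pieces, treat each block by induction on the size of the vertex set, and let Shelah's Revised GCH supply the missing cardinal arithmetic) matches the paper's strategy, but both halves of your argument have a genuine gap exactly where the Revised GCH has to do its work. In the reduction step $\lambda>\kappa$, a plain elementary chain $\<N_\alpha:\alpha<\cf(\lambda)\>$ does not guarantee that every edge is absorbed in the way your concatenation needs. It is not obvious that $|\mc E|\le\lambda$, so an edge $E$ need not be an element of any $N_\alpha$; and even for edges that are eventually covered, the least $\alpha$ with $E\subs V_\alpha$ may be a limit ordinal, in which case $E$ meets cofinally many blocks $V_{\beta+1}\setm V_\beta$ and has no maximum in the concatenated order. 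Worse, if $\lambda$ is singular of countable cofinality and $\cf(\kappa)>\omega$, some edges of size $\kappa$ are contained in no $V_\alpha$ at all. What the concatenation actually requires is that every edge $E$ satisfy $E\subs V_{\alpha+1}$ and $|E\cap V_\alpha|<|E|$ for some $\alpha$ (the paper's notion of a \emph{good cut}), so that $E\setm V_\alpha$ is again a full-sized edge inside one block and inherits its maximum there. Producing such a cut is precisely where the paper invokes Lemma \ref{lm:loc_small} from \cite{So}, whose proof rests on Shelah's Revised GCH: for any $X$ with $|X|<\lambda$, only $\le|X|$ edges meet $X$ in a set of size $\ge\beth_\omega$, and these can be swept into the next level of the filtration. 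So the Revised GCH is needed already in your reduction step, not only in the base case, and the elementary-chain argument as you state it would fail for singular $\lambda$.

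In the base case $\lambda=\kappa$ you sketch a simultaneous recursion building $f$ and $\prec$, but the step that is supposed to ``bound the number of previously made commitments'' is exactly the part that is never carried out, and the recursion tacitly assumes $|\mc E|\le\kappa$, which is itself a nontrivial consequence of the Revised GCH (again via Lemma \ref{lm:loc_small}). Once $|\mc E|\le\kappa$ is known, no delicate coordination of choices is needed at all: enumerate $\mc E=\{A_\zeta:\zeta<\xi\}$ with $\xi\le\kappa$, observe that $A'_\zeta=A_\zeta\setm\bigcup_{\eta<\zeta}A_\eta$ still has size $\kappa$ because each pairwise intersection has size $<\rho<\kappa$, give each block $A'_\zeta$ a well-order with a top element, and stack the blocks; the top of $A'_\zeta$ is then the $\preceq$-maximum of $A_\zeta$. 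I would advise restructuring your proof along the paper's lines: first isolate the counting lemma, derive good cuts from it, and then feed everything into the general stepping-up Theorem \ref{tm:gen-step-up}(2).
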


First we need a general stepping up method.

\begin{definition}
Let $\mc A$ be a hypergraph, ${\lambda}=|\bigcup A|$.
We say that a continuous, increasing sequence 
$\<G_{\alpha}:{\alpha}<\cf({\lambda})\>\subs {[\bigcup \mc A]}^{<{\lambda}}$ is a {\em good cut} of $\mc A$
iff 
\begin{enumerate}[(i)]
\item 
$G_0=\empt$ and  $\bigcup_{{\alpha}<\cf({\lambda})}G_{\alpha}=\bigcup \mc A$, \smallskip
\item $\forall A\in \mc A$ $\exists {\alpha}<cf({\lambda})$ $A\subs G_{{\alpha}+1}$ and 
$|G_{\alpha}\cap A|<|A|$. 
\end{enumerate} 
\end{definition}

\begin{definition}
Given hypergraphs  $\mc A$ and $\mc B$  we say that $\mc B$ is a {\em shrink}  of $\mc A$ and we 
write 
$\mc B\ll \mc A$ iff $\forall B\in \mc B$ $\exists A\in \mc A$ such that 
$B\subs A$ and $|A\setm B|<|A|$.
\end{definition}

\begin{theorem}\label{tm:gen-step-up}
    Let ${\kappa}$ be an infinite  cardinal. 
    Assume that $\mbb A$ is a collection of hypergraphs
    with the following properties:
    \begin{enumerate}[(a)]
    \item If $\mc A\in \mbb A$ and $\mc B\ll \mc A$, then $\mc B\in \mbb A$.    
 
    \item if $\mc A\in \mbb A$,  and ${\lambda}=|\bigcup\mc A|>{\kappa}$ 
    then $\mc A$ has a good cut.
    \end{enumerate}
  
        \noindent (1) 
        If 
 \begin{enumerate}[(a)]\addtocounter{enumi}{2}
 \item every $\mc A\in \mbb A$ with $|\bigcup\mc A|\le {\kappa}$ has 
 a minimal vertex cover,
 \end{enumerate}       
then every $\mc A\in \mbb A$ has a minimal vertex cover.

\noindent (2) If
\begin{enumerate}[(a')]\addtocounter{enumi}{2}
    \item every $\mc A\in \mbb A$ with $|\bigcup\mc A|\le {\kappa}$ has 
    a maximizing well order, 
    \end{enumerate}       
    then every $\mc A\in \mbb A$ has a maximizing well-order.

\end{theorem}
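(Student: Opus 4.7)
The plan is to prove (1) and (2) together by induction on ${\lambda}=|\bigcup \mc A|$, the base ${\lambda}\le{\kappa}$ being exactly hypothesis (c) or (c'). For the inductive step with ${\lambda}>{\kappa}$, use (b) to fix a good cut $\<G_{\alpha}:{\alpha}<\cf({\lambda})\>$ of $\mc A$, write $H_{\alpha}=G_{{\alpha}+1}\setm G_{\alpha}$ for each ${\alpha}<\cf({\lambda})$, and for each $A\in\mc A$ pick some ${\alpha}(A)<\cf({\lambda})$ satisfying condition (ii), so that $A\subs G_{{\alpha}(A)+1}$ and $|A\cap G_{{\alpha}(A)}|<|A|$. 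Since $A\subs G_{{\alpha}(A)+1}=G_{{\alpha}(A)}\cup H_{{\alpha}(A)}$, this forces $|A\cap H_{{\alpha}(A)}|=|A|$, so every edge has its ``main part'' inside the single level $H_{{\alpha}(A)}$.

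For part (2), set $\mc A_{\alpha}=\{A\cap H_{\alpha}:A\in\mc A,\ {\alpha}(A)={\alpha}\}$. Then $\mc A_{\alpha}\ll\mc A$ because $|A\setm(A\cap H_{\alpha})|=|A\cap G_{\alpha}|<|A|$, so by (a) we have $\mc A_{\alpha}\in\mbb A$; its ground set lies in $H_{\alpha}\subs G_{{\alpha}+1}$ and has cardinality strictly less than ${\lambda}$, so the inductive hypothesis yields a maximizing well order $\preceq_{\alpha}$ of $H_{\alpha}$ for $\mc A_{\alpha}$. Concatenate these: declare $x\preceq y$ iff either $x\in H_{\alpha}$, $y\in H_{\beta}$ with ${\alpha}<{\beta}$, or ${\alpha}={\beta}$ and $x\preceq_{\alpha}y$. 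For every $A\in\mc A$, every element of $A\cap H_{{\alpha}(A)}$ lies strictly above every element of $A\cap G_{{\alpha}(A)}$ in $\preceq$, so the $\preceq_{{\alpha}(A)}$-maximum of $A\cap H_{{\alpha}(A)}$ is the $\preceq$-maximum of $A$.

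Part (1) is more delicate because the naive union of level-by-level minimal vertex covers need not be minimal: a vertex $y\in H_{\alpha}$ chosen to witness minimality through some edge $A$ may be spoiled by cover points from other levels that also meet $A$. The fix is to process levels in order and handle only edges not yet covered. Build $Y_{\alpha}\subs H_{\alpha}$ recursively on ${\alpha}<\cf({\lambda})$: given $Y_{<{\alpha}}$, set
\[
\mc A_{\alpha}^*=\{A\cap H_{\alpha}:A\in\mc A,\ {\alpha}(A)={\alpha},\ A\cap Y_{<{\alpha}}=\empt\},
\]
observe that $\mc A_{\alpha}^*\ll\mc A$ (same computation) and that its ground set has size $<{\lambda}$, and apply the inductive hypothesis to pick a minimal vertex cover $Y_{\alpha}$ of $\mc A_{\alpha}^*$. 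Put $Y=\bigcup_{{\alpha}<\cf({\lambda})}Y_{\alpha}$.

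Coverage is immediate: any $A\in\mc A$ with ${\alpha}(A)={\alpha}$ either already meets $Y_{<{\alpha}}$, or else $A\cap H_{\alpha}\in\mc A_{\alpha}^*$ is met by $Y_{\alpha}$. For minimality, given $y\in Y_{\alpha}$, pick $A\in\mc A$ with ${\alpha}(A)={\alpha}$, $A\cap Y_{<{\alpha}}=\empt$, and $(A\cap H_{\alpha})\cap Y_{\alpha}=\{y\}$; then $Y_{\beta}\subs H_{\beta}\subs G_{\alpha}$ for ${\beta}<{\alpha}$ gives $A\cap Y_{\beta}\subs A\cap Y_{<{\alpha}}=\empt$, while $Y_{\beta}\subs H_{\beta}$ is disjoint from $G_{{\alpha}+1}\supseteq A$ for ${\beta}>{\alpha}$, so $A\cap Y=A\cap Y_{\alpha}=\{y\}$. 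The one technical point requiring care is precisely this minimality bookkeeping, which the ``$A\cap Y_{<{\alpha}}=\empt$'' clause in the definition of $\mc A_{\alpha}^*$ is designed to make work.
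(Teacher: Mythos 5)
Your proof is correct and follows essentially the same route as the paper's: induction on ${\lambda}=|\bigcup\mc A|$, using the good cut from (b) to slice $\mc A$ into level families that are shrinks of $\mc A$ (hence in $\mbb A$ by (a)) with ground sets of size $<{\lambda}$, then concatenating the well-orders for (2) and recursively excluding already-covered edges for (1). Your bookkeeping via the designated level ${\alpha}(A)$ is equivalent to the paper's running invariant that $Y_{\le{\alpha}}$ is a minimal vertex cover of $\mc A\sqcap G_{{\alpha}+1}$, since the condition $A\subs G_{{\alpha}+1}$, $|A\cap G_{\alpha}|<|A|$ determines ${\alpha}$ uniquely.
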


\begin{proof}
(1) By induction on ${\lambda}=|\bigcup \mc A|$.

If ${\lambda}={\kappa}$, then (c) implies the statement. 

  Assume that ${\lambda}>{\kappa}$.
 Let $\<G_{\alpha}:{\alpha}<\cf({\lambda})\>$ be a good cut of $\mc A$ by (b). 
  
By transfinite induction on ${\alpha}<{\lambda}$ we will define
$Y_{\alpha}\subs G_{{\alpha}+1}\setm G_{\alpha}$
and a function $w_{\alpha}:Y_{\alpha}\to \mc A\sqcap G_{{\alpha}+1}$ 
such that $Y_{\le {\alpha}}(=\bigcup_{{\alpha}'\le {\alpha}}Y_{{\alpha}'})$ is a minimal vertex cover of  $\mc A\sqcap G_{{\alpha}+1}$
witnessed by $w_{\le \alpha}$,
as follows.

Assume that $\<Y_{\zeta}:{\zeta}<{\alpha}\>$ and $\<w_{\zeta}:{\zeta}<{\alpha}\>$
are defined. 
Let \begin{displaymath} 
\mc A_{\alpha}=\big(\mc A[-Y_{<{\alpha}}]\sqcap G_{{\alpha}+1}\big)\restriction 
(G_{{\alpha}+1}\setm G_{\alpha}).
\end{displaymath}
Then $\mc A_{\alpha}\ll \mc A$, so $\mc A_{\alpha}\in \mbb A$ by (a).
Since $|\bigcup\mc A_{\alpha}|\le |G_{{\alpha}+1}|<{\lambda}$,
by the inductive assumption $\mc A_{\alpha}$ has a minimal vertex cover 
$Y_{\alpha}$ witnessed by a function $w_{\alpha}$.

After the inductive construction
put $Y=\bigcup_{{\alpha}<\cf({\lambda})}Y_{\alpha}$
and $w=\bigcup_{{\alpha}<\cf({\lambda})}w_{\alpha}$.
Then $Y$ is a minimal vertex cover of $\mc A$ witnessed by $w$.
\medskip

\noindent (2)
By induction on ${\lambda}=|\bigcup \mc A|$.

If ${\lambda}={\kappa}$, then (c') implies the statement. 

  Assume that ${\lambda}>{\kappa}$.
 Let $\<G_{\alpha}:{\alpha}<\cf({\lambda})\>$ be a good cut of $\mc A$ by (b). 
  
For each ${\alpha}<{\kappa}$ consider the family 
\begin{displaymath} 
    \mc A_{\alpha}=
    \{A\setm G_{\alpha}: A\in \mc A, A\subs G_{{\alpha}+1}, 
    |A\cap G_{\alpha}|<|A|\}.
    \end{displaymath}
Then  $\mc A_{\alpha}\ll \mc A$ and so   $\mc A_{\alpha}\in \mbb A$ by (a).

Since $|\bigcup \mc A_{\alpha}|\le |G_{{\alpha}+1}|<{\lambda}$,
by the inductive assumption, the family  
$\mc A_{\alpha}$ has a maximizing well order $\preceq_{\alpha}$.
We can assume that $\preceq_{\alpha}$ is a well-order of 
$G_{{\alpha}+1}\setm G_{\alpha}$.
Define the well ordering $\preceq$ of $\bigcup \mc A$ as follows:
\begin{enumerate}[(1)]
\item if ${\alpha}<{\beta}<{\lambda}$, then 
\begin{displaymath}
(G_{{\alpha}+1}\setm G_{\alpha})\prec (G_{{\beta}+1}\setm G_{\beta}),
\end{displaymath}
\item $\preceq\restriction (G_{{\alpha}+1}\setm G_{\alpha})=\preceq_{\alpha}$.
\end{enumerate}
Then $\preceq$ is a maximizing well-order of $\mc A$.
\end{proof}

\begin{proof}[Proof of Theorem \ref{tm:maxwo-gch}]

    We want to apply Theorem \ref{tm:gen-step-up}(2).
    So let  
    \begin{displaymath}
    \mbb A=\{\mc A:\exists {\lambda}\ge {\kappa}\ge {\omega}_2\ (\mc A\subs 
    {[{\lambda}]}^{{\kappa}}\text{ has property $\cad{2}{{\omega}}$})\}.
    \end{displaymath}
    
We should verify properties \ref{tm:gen-step-up}(a,b,c')

(a) is trivial by definition. 

To show (b) assume that 
$\mc A\in \mbb A$, ${\lambda}=\bigcup \mc A>{\kappa}$.
Next we should recall a lemma from \cite{HJSSz}:

\begin{lemma}[{\cite[Lemma 8.5]{HJSSz}}]\label{lm:gen_small_close}
Assume that $\lambda \ge \omega_2$ and ${\mu}^{\omega}={\mu}^+$
holds for each ${\mu}<{\lambda}$ with $\cf({\mu})={\omega}$. If
$\acal$ is an  hypergraph with property $\cad{2}{{\omega}}$ and $X$ is any
set with $\, |X| < \lambda$, then
\begin{displaymath}
 \big|\{A\in \acal:|X\cap A| > \omega \}\big| \le |X|.
\end{displaymath}
\end{lemma}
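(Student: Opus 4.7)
The plan is to prove the lemma by induction on the cardinality $\mu=|X|$, for cardinals $\mu<\lambda$. The base case $\mu\le\omega$ is immediate: every edge $A$ then satisfies $|A\cap X|\le\omega$, so the family in question is empty. For the inductive step the argument splits according to whether $\cf(\mu)$ is countable or uncountable.

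If $\cf(\mu)=\omega$, fix a partition $X=\bigcup_{n<\omega}X_n$ with $|X_n|<\mu$ for each $n$. Since a countable union of countable sets is countable, each $A$ with $|A\cap X|>\omega$ has some $n$ with $|A\cap X_n|>\omega$. Hence $\{A\in\acal:|A\cap X|>\omega\}$ is covered by the countably many subfamilies $\mc F_n=\{A\in\acal:|A\cap X_n|>\omega\}$, and the inductive hypothesis applied to each $X_n$ gives $|\mc F_n|\le|X_n|$. Summing yields $\sum_{n<\omega}|X_n|\le\mu=|X|$, as required.

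If $\cf(\mu)>\omega$, I would exploit the $\cad{2}{\omega}$ property via a counting of countable subsets. For each $A$ in the family choose some $D_A\in[A\cap X]^\omega$, possible because $|A\cap X|>\omega$. Property $\cad{2}{\omega}$ gives $|A\cap B|<\omega$ whenever $A\ne B$, so $D_A\ne D_B$: otherwise the countable set $D_A=D_B$ would sit inside the finite set $A\cap B$. Thus $A\mapsto D_A$ is injective, and the family has cardinality at most $|[X]^\omega|$.

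The real content, and what I expect to be the main obstacle, is the cardinal-arithmetic estimate $|[X]^\omega|\le|X|$ whenever $\cf(|X|)>\omega$. Note first that the hypothesis applied at $\mu=\omega$ already forces $2^\omega=\omega^\omega=\omega_1$, i.e.\ CH, so that the base value $\omega_1^\omega=\omega_1$ is available. A side induction on $\nu<\lambda$ then yields $\nu^\omega\le\nu^+$ uniformly, with $\nu^\omega=\nu$ whenever $\cf(\nu)>\omega$: in the uncountable-cofinality case, take a continuous filtration $\nu=\bigcup_{\alpha<\cf(\nu)}\nu_\alpha$ with $\nu_\alpha<\nu$; each countable subset of $\nu$ lies in some $\nu_\alpha$, so $|[\nu]^\omega|\le\cf(\nu)\cdot\sup_\alpha|\nu_\alpha|^\omega\le\nu\cdot\nu=\nu$ by the inductive hypothesis, while the countable-cofinality case is exactly the standing assumption. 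Feeding $|X|^\omega=|X|$ back into the previous paragraph finishes the proof.
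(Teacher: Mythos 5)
The paper does not actually prove this lemma: it is quoted verbatim from \cite[Lemma 8.5]{HJSSz}, so there is no in-paper proof to compare against. Your argument is correct and is essentially the standard one: the induction on $|X|$ with the case split on $\cf(|X|)$, the injection $A\mapsto D_A$ into ${[X]}^{\omega}$ furnished by property $\cad{2}{{\omega}}$, and the derivation of $\nu^{\omega}=\nu$ for $\cf(\nu)>\omega$ (and $\nu^{\omega}\le\nu^{+}$ in general) from the hypothesis ${\mu}^{\omega}={\mu}^{+}$ at countable cofinalities are all sound.
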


Next, by transfinite recursion  on ${\zeta}<{\lambda}$,   
define a a continuous sequence 
of subsets of ${\lambda}$,  $\<M_{\zeta}:{\zeta}<{\lambda}\>$, 
such that 
\begin{enumerate}[(1)]
\item $M_0=\empt $ and   
$
{\zeta}\subs M_{\zeta}\in {[{\lambda}]}^{{\kappa}+|{\zeta}|}$ for ${\zeta}>0$,
\item if $A\in \mc A$ and $|A\cap M_{\zeta}|>{\omega}$, then 
$A\subs M_{{\zeta}+1}$.
\end{enumerate}
By Lemma \ref{lm:gen_small_close}, if $M_{\zeta}$ is given, we can  choose 
a suitable $M_{{\zeta}+1}$.

    Let $\<{\lambda}_{\alpha}:{\alpha}<cf({\lambda})\>$ be a strictly increasing 
    cofinal sequence of limit ordinals in ${\lambda}$.
   
Let $G_{\alpha}=M_{{\lambda}_{\alpha}}$ for ${\alpha}<cf({\lambda})$ of $\mc A$.
We claim that $\<G_{\alpha}:{\alpha}<cf({\lambda})\>$ is a good cut.   

Let $A\in \mc A$ be arbitrary. Since $|A|\ge {\omega}_2$, 
there is ${\zeta}<{\lambda}$ such that $|A\cap M_{\zeta}|>{\omega}$,
and so $A\subs M_{{\zeta}+1}$.
Thus we can define 
\begin{displaymath}
{\alpha}=\min\{{\alpha}'<{\lambda}:|G_{{\alpha}'}\cap A|\ge {\omega}_2\}.
\end{displaymath}
Since $G_{\alpha}=M_{{\lambda}_{\alpha}}$ and 
${\lambda}_{\alpha}$ is a limit ordinal, there is ${\xi}<{\lambda}_{\alpha}$ with 
$|A\cap M_{\xi}|>{\omega}$, and so $A\subs M_{{\xi}+1}\subs G_{\alpha}$.

If ${\alpha}$ is a limit ordinal then there is ${\beta}<{\alpha}$
with ${\xi}<{\lambda}_{\beta}$, and so $A\subs M_{{\xi}+1}\subs G_{\beta}$ which
contradicts the minimality of ${\alpha}$.

Thus ${\alpha}={\beta}+1$ for some ${\beta}$, and so $A\subs G_{{\beta}+1}$
and $|G_{\beta}\cap A|\le {\omega}_1<{\kappa}$. 

Se we checked property \ref{tm:gen-step-up}(b).

\smallskip
 
(c) is straightforward:
if $\mc A\subs {[{\kappa}]}^{{\kappa}}$ is $\cad{2}{\omega}$,
then $|\mc A|\le {\kappa}$ by Lemma \ref{lm:gen_small_close}, so we can write 
$\mc A=\{A_{\zeta}:{\zeta}<{\xi}\}$.
Now let $\preceq$ is a well ordering of ${\kappa}$
such that writing $A_{\zeta}'=A_{\zeta}\setm A_{<{\zeta}}$ we have 
\begin{enumerate}[(1)]
\item $A'_{\xi}\prec A'_{\eta}$ for ${\xi}<{\eta}<{\lambda}$,
\item the order type of $A'_{\eta}$ is ${\kappa}+1$, i.e. 
$A'_{\eta}$ has a $\preceq$-maximal element.
\end{enumerate} 
Then $\preceq$ maximizes $\mc A$. 

Thus we verified condition \ref{tm:gen-step-up}(2)(b) and so 
we can apply Theorem \ref{tm:gen-step-up}(2) to show that every 
$\mc A\in \mbb A$ has a maximizing well-ordering.
\end{proof}

\begin{proof}[Proof of Theorem \ref{tm:maxwo-bo}]
    We want to apply Theorem \ref{tm:gen-step-up}(2).
    So let  
    \begin{displaymath}
    \mbb A=\{\mc A:\exists {\lambda}\ge {\kappa}\ge \bo >{\rho}\ (\mc A\subs 
    {[{\lambda}]}^{{\kappa}}\text{ has property $\cad{2}{{\rho}}$})\}.
    \end{displaymath}
    
We should verify properties \ref{tm:gen-step-up}(a,b,c').

(a) is trivial by definition.

To show (b) assume that 
$\mc A\in \mbb A$, ${\lambda}=\bigcup \mc A>{\kappa}$.
Next we should recall a lemma from \cite{So} which based on 
the following celebrated result of Shelah.

\newtheorem{shtheorem}{Shelah's Revised CGH Theorem\makebox[-2mm]{}} 
\renewcommand{\theshtheorem}{}

\begin{shtheorem}[{\cite[Theorem 0.1]{Sh-GCH}}]
    \label{tm:sh}
    \mbox{} \\If ${\mu}\ge \bo$, then ${\mu}^{[{\nu}]}={\mu}$
    for each large enough regular cardinal ${\nu}<\bo$.  
    \end{shtheorem}

\begin{lemma}[{\cite[Lemma 3.3]{So}}]\label{lm:loc_small}
    If ${\lambda}\ge  \bo >{\mu}$, and 
    $\{A_{\alpha}:{\alpha}<\tau\}\subs \br {\lambda};\bo;$
    is a hypergraph which property $\cad{2}{{\mu}}$, then $\tau\le {\lambda}$, and  
     there is an increasing, continuous sequence 
    $\<G_{\zeta}:{\zeta}<\cf({\lambda})\>\subs \br {\lambda};<{\lambda};$
     such that 
    \begin{equation}\label{eq:loc_small}
    \text{
    $\forall {\zeta}<\cf({\lambda})$\
    $\forall {\alpha}\in G_{\zeta+1}\setm G_{\zeta}$\ $(\ |A_{\alpha}\cap G_{\zeta}|<\bo$
    and $A_{\alpha}\subs G_{{\zeta}+1}\ )$.}
    \end{equation}
    \end{lemma}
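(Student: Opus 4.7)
The plan is to first bound ${\tau}\le{\lambda}$ via Shelah's Revised GCH theorem, and then construct the continuous chain $\<G_{\zeta}:{\zeta}<\cf({\lambda})\>$ by a closure argument closely mirroring the proof of Theorem~\ref{tm:maxwo-gch}. Since ${\lambda}\ge\bo$, Shelah's theorem furnishes a regular cardinal ${\nu}$ with ${\mu}<{\nu}<\bo$ such that ${\lambda}^{[{\nu}]}={\lambda}$; fix a witnessing family $\mc P\subs{[{\lambda}]}^{{\nu}}$ of cardinality ${\lambda}$, so that every $B\in{[{\lambda}]}^{{\nu}}$ is covered by the union of fewer than ${\nu}$ members of $\mc P$.

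For each ${\alpha}<{\tau}$ pick an arbitrary $B_{\alpha}\in{[A_{\alpha}]}^{{\nu}}$; by the covering property and the regularity of ${\nu}$, one of the $<{\nu}$ covers meets $B_{\alpha}$ in a ${\nu}$-sized piece, producing $P_{\alpha}\in\mc P$ with $|P_{\alpha}\cap A_{\alpha}|={\nu}$. The principal obstacle is bounding each fibre $\{{\alpha}:P_{\alpha}=P\}$: for each such ${\alpha}$ choose an arbitrary $Y_{\alpha}\in{[A_{\alpha}\cap P]}^{{\mu}}$; if $Y_{\alpha}=Y_{\beta}$ for ${\alpha}\ne{\beta}$ then $Y_{\alpha}\subs A_{\alpha}\cap A_{\beta}$ would contradict $|A_{\alpha}\cap A_{\beta}|<{\mu}$, so ${\alpha}\mapsto Y_{\alpha}$ is injective into ${[P]}^{{\mu}}$ and the fibre has size at most ${\nu}^{{\mu}}$. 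The arithmetic ${\nu}^{{\mu}}\le 2^{{\nu}}<\bo\le{\lambda}$ — which holds because ${\nu}<\bo=\sup_n\beth_n$ forces ${\nu}<\beth_n$ and hence $2^{{\nu}}\le\beth_{n+1}<\bo$ for some $n<{\omega}$ — then yields ${\tau}\le|\mc P|\cdot{\nu}^{{\mu}}\le{\lambda}$.

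For the chain, I first derive the analogue of \cite[Lemma 8.5]{HJSSz}: for every $X\subs{\lambda}$ with $|X|<{\lambda}$, $|\{{\alpha}<{\tau}:|A_{\alpha}\cap X|\ge\bo\}|\le|X|$. This is vacuous for $|X|<\bo$; for $|X|\ge\bo$ pick $B'_{\alpha}\in{[A_{\alpha}\cap X]}^{\bo}$ for each such ${\alpha}$ (the $B'_{\alpha}$'s are distinct by $\cad{2}{{\mu}}$, since any two would have intersection of size $\bo>{\mu}$) and apply the ${\tau}\le{\lambda}$ bound just proved to the hypergraph $\{B'_{\alpha}\}\subs{[X]}^{\bo}$, which inherits $\cad{2}{{\mu}}$. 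The construction of the $G_{\zeta}$'s then proceeds exactly as in the proof of Theorem~\ref{tm:maxwo-gch}: by recursion on ${\xi}<{\lambda}$ build a continuous chain $\<M_{\xi}:{\xi}<{\lambda}\>$ with $M_0=\empt$, ${\xi}\subs M_{\xi}$, $|M_{\xi}|=\max(\bo,|{\xi}|)$ for ${\xi}\ge 1$, and the closure condition that whenever ${\alpha}\in M_{\xi}\cap{\tau}$ or $|A_{\alpha}\cap M_{\xi}|\ge\bo$, we have $\{{\alpha}\}\cup A_{\alpha}\subs M_{{\xi}+1}$; the localisation lemma above keeps the size of $M_{\xi}$ under control at each successor stage. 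Taking $G_{\zeta}=M_{{\lambda}_{\zeta}}$ for any strictly increasing, continuous, cofinal sequence $\<{\lambda}_{\zeta}:{\zeta}<\cf({\lambda})\>$ of limit ordinals in ${\lambda}$, the required conditions in (\ref{eq:loc_small}) for ${\alpha}\in G_{{\zeta}+1}\setm G_{\zeta}$ follow at once: $|A_{\alpha}\cap G_{\zeta}|<\bo$ because otherwise continuity at ${\lambda}_{\zeta}$ and the closure would have absorbed ${\alpha}$ below ${\lambda}_{\zeta}$, and $A_{\alpha}\subs G_{{\zeta}+1}$ by the closure applied at the stage when ${\alpha}$ enters.
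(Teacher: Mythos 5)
The paper itself does not prove this lemma: it is quoted from \cite[Lemma 3.3]{So}, so you are supplying an argument where the authors supply only a citation. Your first half is correct and is essentially the intended use of Shelah's theorem: choosing a regular $\nu$ with $\mu<\nu<\beth_\omega$ and $\lambda^{[\nu]}=\lambda$, locating each $A_\alpha$ on some $P_\alpha\in\mc P$ with $|P_\alpha\cap A_\alpha|=\nu$, and bounding the fibres by $\nu^{\mu}\le 2^{\nu}<\beth_\omega$ does give $\tau\le\lambda$, and the localization statement $|\{\alpha:|A_\alpha\cap X|\ge\beth_\omega\}|\le|X|$ follows from it as you say.

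The gap is in the final verification of \eqref{eq:loc_small}. From the closure rule ``$\alpha\in M_\xi$ or $|A_\alpha\cap M_\xi|\ge\beth_\omega$ implies $\{\alpha\}\cup A_\alpha\subseteq M_{\xi+1}$'' you cannot deduce $|A_\alpha\cap G_\zeta|<\beth_\omega$ for $\alpha\in G_{\zeta+1}\setminus G_\zeta$: since $\operatorname{cf}(\beth_\omega)=\omega$, at a limit stage $\lambda_\zeta$ of countable cofinality the cardinals $|A_\alpha\cap M_\xi|$ $(\xi<\lambda_\zeta)$ can all be $<\beth_\omega$ while their increasing union $A_\alpha\cap M_{\lambda_\zeta}$ has size $\beth_\omega$; then the closure never fires below $\lambda_\zeta$, $\alpha$ first enters at stage $\lambda_\zeta+1$, and $|A_\alpha\cap G_\zeta|=\beth_\omega$. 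Concretely, take $\lambda_\zeta=\sup_n\delta_n$ with $\operatorname{cf}(\lambda_\zeta)=\omega$, an edge $A_\alpha\subseteq\lambda_\zeta$ cofinal in $\lambda_\zeta$ with $|A_\alpha\cap\delta_n|=\beth_n$, and $\alpha\ge\lambda_\zeta$: your requirement $\xi\subseteq M_\xi$ already forces $A_\alpha\subseteq M_{\lambda_\zeta}=G_\zeta$, but nothing puts $\alpha$ into $G_\zeta$. Such $\zeta$ are unavoidable when $\operatorname{cf}(\lambda)>\omega$, since continuity of $\<{\lambda}_{\zeta}:{\zeta}<\operatorname{cf}({\lambda})\>$ forces $\operatorname{cf}(\lambda_\zeta)=\omega$ for every $\zeta$ of countable cofinality. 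The repair is to close at the regular threshold $\nu$ instead of $\beth_\omega$: demand $\{\alpha\}\cup A_\alpha\subseteq M_{\xi+1}$ whenever $|A_\alpha\cap M_\xi|\ge\nu$. Then for $\alpha\notin M_{\lambda_\zeta}$ all the sets $A_\alpha\cap M_\xi$ have size $<\nu$, and an increasing union of sets of size $<\nu$ has size at most $\nu^+<\beth_\omega$, which restores \eqref{eq:loc_small}. This requires the localization lemma at level $\nu$, namely $|\{\alpha:|A_\alpha\cap X|\ge\nu\}|\le\max(|X|,\beth_\omega)$ for $|X|<\lambda$, but your own covering argument already yields it: fibres of size $\le\nu^{\mu}<\beth_\omega$ when $|X|\ge\beth_\omega$, and at most $|{[X]}^{\mu}|\le 2^{|X|}<\beth_\omega$ such $\alpha$ when $|X|<\beth_\omega$, so the cardinality bookkeeping $|M_\xi|\le\max(\beth_\omega,|\xi|)$ survives. (A minor separate point: when $\lambda=\beth_\omega$ this makes $|M_\xi|=\lambda$, so the $G_\zeta$ are not in ${[{\lambda}]}^{<{\lambda}}$; that borderline case needs separate treatment, though it is not the one used in the paper.)
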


    This Lemma just claims Claims \ref{tm:gen-step-up}(b).

    (c) is straightforward:
    if $\mc A\subs {[{\kappa}]}^{{\kappa}}$ is $\cad{2}{\omega}$,
    then $|\mc A|\le {\kappa}$ by Lemma \ref{lm:loc_small},  so we can write 
    $\mc A=\{A_{\zeta}:{\zeta}<{\xi}\}$. Now repeat an argument from the proof of the previous theorem: 
     let $\preceq$ is a well ordering of ${\kappa}$
    such that writing $A_{\zeta}'=A_{\zeta}\setm A_{<{\zeta}}$ we have 
    \begin{enumerate}[(1)]
    \item $A'_{\xi}\prec A'_{\eta}$ for ${\xi}<{\eta}<{\lambda}$
    \item the order type of $A'_{\eta}$ is ${\kappa}+1$, i.e. 
    $A'_{\eta}$ has a $\preceq$-maximal element.
    \end{enumerate} 
    Then $\preceq$ maximizes $\mc A$.
    
    Thus we verified condition \ref{tm:gen-step-up}(2)(b) and so 
    we can apply Theorem \ref{tm:gen-step-up}(2) to show that every 
    $\mc A\in \mbb A$ has a maximizing well-ordering.

\end{proof}

\section{Inhomogeneous families}\label{sc:inhomogeneous}

In \cite[Theorem 6]{Ko} Komjáth proved that $\MWO{{\lambda}}{{\kappa}}{2}{k}$, and so 
$\MVC{{\lambda}}{{\omega}\cup \{{\kappa}\}}{2}{k}$ 
 for each 
${\omega}\le {\kappa}\le {\lambda}$.

In this section we want to prove Theorem \ref{tm:nowhere-stac} which 
relaxes the requirements concerning the edge cardinality spectrum 
of the hypergraphs.

\begin{proof}[Proof of Theorem \ref{tm:nowhere-stac}]
Let $r$ be a fixed natural number. 

For   any infinite cardinal  ${\kappa}$ let $\prop{\kappa}$ be the following statement:
\begin{itemize}
    \item[] {\em   
    if 
    $S$ is a nowhere stationary set of cardinals with $\sup S\le {\kappa}$
    then every hypergraph  $\mc A\subs {[{\kappa}]}^{S}$ possessing  
    property  $C(2,r)$ has a minimal vertex cover.} 
\end{itemize}

We prove $\prop{\kappa}$ for each infinite ${\kappa}$ 
by transfinite induction on ${\kappa}$.

\smallskip

For  ${\kappa}={\omega}$, 
$\mc A\subs {[{\omega}]}^{<{\omega}}\cup {[{\omega}]}^{\omega}$, so 
$\prop {\omega}$  is just a special case of 
\cite[Corollary 9]{Ko}. 

\smallskip

So assume that ${\kappa}>{\omega}$ and $\prop{\mu}$ holds for 
each infinite cardinal ${\mu}<{\kappa}$.

\smallskip

To prove $\prop {\kappa}$ 
let  $S$ be a nowhere stationary set of cardinals with $\sup S\le {\kappa}$, and let  
$\mc A\subs {[{\kappa}]}^{S}$ be a hypergraph with property $\cad{2}{r}$.

Let  $\<{\kappa}_{\alpha}:{\alpha}<{\delta}\>$.
be the strictly increasing enumeration of $S\cap {\kappa}$.

Write $\mc B=\mc A\cap {[{\kappa}]}^{<{\kappa}}$
and $\mc C=\mc A\cap {[{\kappa}]}^{{\kappa}}$.
Adding pairwise disjoint "dummy" sets to $\mc A$ we can assume that $|\mc C|={\kappa}$.
Let 
$\<C_{\xi}:{\xi}<{\kappa}\>$  be an   
enumeration of $\mc C$.

\noindent{\bf Step 1.} {\em Construction of a chain of cardinals.}

We define an increasing sequence of cardinals $\<{\mu}_{\zeta}:{\zeta}<{\kappa}\>$
 as follows.

Let ${\mu}_0=0$.

If ${\kappa}={\mu}^+$ then let ${\mu}_{\zeta}={\mu}$ for each $1\le {\zeta}<{\kappa}$.

If ${\kappa}$ is an uncountable limit cardinal, 
let ${\rho}=cf({\delta}$.

Since $S$ is not stationary in ${\kappa}$, there is a 
a strictly increasing continuous sequence of infinite cardinals,  $\<{\nu}_{\alpha}:{\alpha}<{\rho}\>$,  which is cofinal 
in ${\kappa}$, and ${\nu}_{\alpha}\notin S$ for each limit ordinal ${\alpha}<{\rho}$. 
For $1\le {\zeta}<{\kappa}$ let $${\mu}_{\zeta}=\min\{{\nu}_{\alpha}:{\nu}_{\alpha}\ge {\zeta}\}.$$

Observe that the sequence $\<{\mu}_{\zeta}:{\zeta}<{\kappa}\>$ is increasing and continuous.

\noindent{\bf Step 2.} {\em The inductive construction.}

By transfinite recursion 
on  ${\zeta}<{\kappa}$ we  
will define  an increasing, continuous sequence $\<M_{\zeta}:{\zeta}<{\kappa}\>$ 
of subsets of ${\kappa}$, a sequence $\<Y_{\zeta}:{\zeta}<{\omega}\>$ of pairwise disjoint
subsets of ${\kappa}$, and a sequence $\<w_{\zeta}:{\zeta}<{\kappa}\>$ of functions
such that   
\begin{enumerate}[(1)\lsub{\zeta}]
    \item \label{first} \label{m-closed} ${\zeta}\subs M_{\zeta}\in {[{\kappa}]}^{{\mu}_{\zeta}}$,    
    \item \label{m-back} if $B\in \mc B $, $|B\cap M_{\zeta}|\ge r$ and $|B|\le |M_{\zeta}|$ 
    then 
    $B\subs M_{\zeta}$,
\item \label{Ywlimit} If ${\zeta}=0$ or ${\zeta}$ is a limit ordinal, then 
$M_{\zeta}=M_{<{\zeta}}$ and $Y_{\zeta}=w_{\zeta}=\empt$,
    \item \label{Yw} if ${\zeta}={\eta}+1$, then 
    $Y_{\zeta}\subs (M_{{\zeta}}\setm M_{\eta})\cap {\kappa}$
    and 
    $$w_{\zeta}:Y_{\zeta}\to 
    \big(\mc B[-Y_{\le {\eta}}]\sqcap M_{\zeta}\big)
    \cup\{C_{\eta}\},$$
\item\label{ay} 
$\met{\mc A}{Y_{\le {\zeta}}}\supset 
(\mc B\sqcap \mc M_{\zeta})\cup\{C_{\xi}:{\xi}<{\zeta}\}$,\smallskip
\item \label{wit} $w_{\le {\zeta}}(y)\cap Y_{\le {\zeta}}=\{y\}$ for each $y\in Y_{\le {\zeta}}$, \smallskip
\item \label{cinside}\label{last} if $C_{\eta}= w_{\zeta}(y)$ for some $y\in Y_{\zeta}$, then  
\begin{displaymath}
    C_{\eta}\cap \bigcup ((\nmet{\mc B}{Y_{\le {\eta}}})\cap {[{\kappa}]}^{\le |M_{\zeta}|})\subs 
    M_{\zeta}.
    \end{displaymath}
\end{enumerate}

\noindent{\bf Step 3.} {\em The inductive step.}

Assume that we have constructed $\<Y_{\xi}:{\xi}<{\zeta}\>$ and $\<w_{\xi}:{\xi}<{\zeta}\>$
such that \eqref{first}\lsub{{\xi}}-\eqref{last}\lsub{\xi} hold for ${\xi}<{\zeta}$.

\medskip 
If ${\zeta}=0$, then let $M_{\zeta}=Y_{\zeta}=w_{\zeta}=\empt$.
Since ${\mu}_0=0$, \eqref{m-closed}\lsub{{\zeta}} holds. The other requirements are trivial.

\medskip 
If ${\zeta}$ is a limit ordinal, take $M_{\zeta}=M_{<{\zeta}}$ 
and $Y_{\zeta}=w_{\zeta}=\empt$.  
\medskip 

Then \eqref{m-closed}\lsub{\zeta} holds because 
${\mu}_{\zeta}=\sup\{{\mu}_{\xi}:{\xi}<{\zeta}\}$.
\medskip 

To check \eqref{m-back}\lsub{\zeta}, assume that 
$B\in \mc B $,  $|B\cap M_{\zeta}|\ge r$ and $|B|\le |M_{\zeta}|$.

Since $M_{\zeta}=M_{<{\zeta}}$, there is ${\xi}<{\zeta}$ such that 
$|B\cap M_{\xi}|\ge r$.

Next we show that $|B|\le |M_{\eta}|$ for some ${\eta}<{\zeta}$.
        We know that $|B|\le |M_{\zeta}|$. If $|M_{\eta}|<|M_{\zeta}|$
        for each ${\eta}<{\zeta}$, then $|M_{\zeta}|={\mu}_{\zeta}={\nu}_{\alpha}$ for some limit ${\alpha}$, 
        and so $|M_{\zeta}|\notin S$, and so $|B|<|M_{\zeta}|$.
        Since $M_{\zeta}=\bigcup_{{\eta}<{\zeta}}M_{\eta}$, there is ${\eta}<{\zeta}$
        such that $|B|\le |M_{\eta}|$.

        Let ${\sigma}=\max({\xi},{\eta})$.

Then $|B\cap M_{\sigma}|\ge r$ and $|B|\le |M_{\sigma}|$, and so 
$B\subs M_{\sigma}\subs M_{\zeta}$ by \eqref{m-back}\lsub{\sigma}.
Thus \eqref{m-back}\lsub{\zeta} holds.

\medskip 

\eqref{Ywlimit}\lsub{\zeta} holds by the construction.

\medskip 

\eqref{Yw}\lsub{\zeta} is void. 

\medskip

To check \eqref{ay}\lsub{{\zeta}} first observe that for each ${\eta}<{\xi}$,
$C_{\eta}\in \met{\mc A}{Y_{\le {\eta}+1}}$
by \eqref{ay}\lsub{{{\eta}+1}}, so $\met{\mc A}{Y_{\le {\zeta}}}\supset 
\{C_{\xi}:{\xi}<{\zeta}\}$.

Assume that $B\in \mc B\sqcap M_{\zeta}$, i.e. $B\subs M_{\zeta}$ and $B\in \mc B$.
If  $B$ is finite then $B\subs M_{\sigma}$ for some ${\sigma}<{\zeta}$.
Thus $B\in \mc B\sqcap M_{\sigma}\subs \met{\mc A}{Y_{\le {\sigma}}}\subs \met{\mc A}{Y_{\le {\zeta}}}$

So we can assume that $B$ is infinite. 
Since $M_{\zeta}=M_{<{\zeta}}$, there is ${\xi}<{\zeta}$ such that 
$|B\cap M_{\xi}|\ge r$.

Next we show that $|B|\le |M_{\eta}|$ for some ${\eta}<{\zeta}$.
        We know that $|B|\le |M_{\zeta}|$. If $|M_{\eta}|<|M_{\zeta}|$
        for each ${\eta}<{\zeta}$, then $|M_{\zeta}|={\mu}_{\zeta}={\nu}_{\alpha}$ for some limit ${\alpha}$, 
        and so $|M_{\zeta}|\notin S$, and so $|B|<|M_{\zeta}|$.
        Since $M_{\zeta}=\bigcup_{{\eta}<{\zeta}}M_{\eta}$, there is ${\eta}<{\zeta}$
        such that $|B|\le |M_{\eta}|$.

        Let ${\sigma}=\max({\xi},{\eta})$.

Then $|B\cap M_{\sigma}|\ge r$ and $|B|\le |M_{\sigma}|$ and so 
$B\subs M_{\sigma}\subs M_{\zeta}$ by \eqref{m-back}\lsub{\sigma}.
Thus \eqref{ay}\lsub{\zeta} holds.

\medskip 

\eqref{wit}\lsub{\zeta}, 
and 
\eqref{cinside}\lsub{\zeta} are clear for limit ${\zeta}$
because $Y_{\le {\zeta}}=Y_{< {\zeta}}$
and $w_{\le {\zeta}}=w_{< {\zeta}}$.

\medskip

So for limit ${\zeta}$ we can carry out the inductive step.

\medskip 

Assume finally that ${\zeta}={\eta}+1.$

\smallskip

First we vertex cover $C_{\eta}$. 

If $C_{\eta}\in \met{\mc C}{Y_{\le\eta}}$, i.e. $C_{\eta}\cap Y_{\le {\eta}}\ne \empt$,  let 
\begin{displaymath}
\text{$Y'_{\zeta}=\{\}$ and $
v'_{\zeta}=\empt$.
}
\end{displaymath}

Assume that $C_{\eta}\in \nmet{\mc C}{Y_{\le\eta}
}$, i.e. $C_{\eta}\cap Y_{\le {\eta}}=\empt$.
Write $$C'_{\eta}=C_{\eta}\cap \bigcup((\nmet{\mc B}{Y_{\le {\eta}}})\cap 
{[{\kappa}]}^{\le {{\mu}_{\zeta}}}).$$
\smallskip
If $|C'_{\eta}|\le {\mu}_{\zeta}$, then 
$C''_{\eta}=((C_{\eta}\setm C'_{\eta})\setm \bigcup_{{\xi}<{\eta}}C_{\xi})$ has cardinality ${\kappa}$,
so we can put 
$$y_{\eta}=\min (C''_{\eta} \setm M_{\eta}).$$
In this case let 
\begin{displaymath}
\text{$Y'_{\zeta}=\{y_{\eta}\}$ and $v'_{\zeta}=\{\<y_{\eta},C_{\eta}\>\}$.
}
\end{displaymath}
If  $|C'_{\zeta}|>{\mu}_{\zeta}$, then  
$|(C'_{\zeta}\setm \bigcup_{{\xi}<{\eta}}C_{\xi})|>{\mu}_{\zeta}$ because 
$|(C_{\zeta}\cap \bigcup_{{\xi}<{\eta}}C_{\xi})|\le r|{\eta}|\le r|{\zeta}|\le {\mu}_{\zeta}$, so we can consider 
$$y_{\eta}=\min ((C'_{\zeta}\setm \bigcup_{{\xi}<{\eta}}C_{\xi})\setm M_{\eta})),
$$
and we can pick $B_{\eta}$ with   
$$y_{\eta}\in B_{\eta}\in \mc B[-Y_{\le {\eta}}]\cap {[{\kappa}]}^{\le {|M_{\zeta}|}}.$$
In this case let 
\begin{displaymath}
\text{$Y'_{\zeta}=\{y_{\eta}\}$ and $v'_{\zeta}=\{\<y_{\eta},B_{\eta}\>\}$.}
\end{displaymath}

\medskip

Next we want to define  sets  $M_{\zeta}$ and  $Z_{\zeta}$, and a function 
$v_{\zeta}$
such that 
$Y_{\zeta}=Y'_{\zeta}\cup Z_{\zeta}$ and  $w_{\zeta}=v'_{\zeta}\cup v_{\zeta}$
meet the requirements. 

If 
$C_{\eta}\in \met{\mc C}{Y_{\le\eta}}$, let 
$M_{\zeta}^-={\zeta}\cup M_{\eta}$.

If $C_{\eta}\in \nmet{\mc C}{Y_{\le\eta}}$ and $|C'_{\eta}|\le {\mu}_{\zeta}$
let $M_{\zeta}^-={\zeta}\cup M_{\eta}\cup Y'_{\eta}\cup C'_{\eta}$.

If $C_{\eta}\in \nmet{\mc C}{Y_{\le\eta}}$ and $|C'_{\eta}|> {\mu}_{\zeta}$
let $M_{\zeta}^-={\zeta}\cup M_{\eta}\cup Y'_{\eta}\cup B_{\eta}$.

Since $M'_{\zeta}\in {[{\kappa}]}^{{\mu}_{\zeta}}$,
using standard closing arguments, we can find a set  $M_{\zeta}\in {[{\kappa}]}^{{\mu}_{\zeta}}$ such that 
\begin{enumerate}[(a)]
\item $M_{\zeta}\supset M_{\zeta}'$,
\item if $B\in \mc B $, $|B\cap M_{\zeta}|\ge r$ and $|B|\le {\mu}_{\zeta}$, 
then 
$B\subs M_{\zeta}$.
\end{enumerate}

Write 
\begin{displaymath}
\mc C^w_{\le \eta}=\bigcup(\ran w_{\le {\eta}}\cap {[{\kappa}]}^{{\kappa}}).
\end{displaymath} 
and let $$\mc B_{\zeta}=\big(\nmet {(\mc B\sqcap M_{\zeta})}{(Y_{\le {\eta}}\cup Y'_{\zeta})}
\big )
$$

Consider the family 
$$\mc B_{\zeta}^-=\mc B_{\zeta}\restriction ((M_{{\zeta}}\setm M_{\eta})\setm (C_{\eta}\cup C^w_{\le {\eta}})).$$

Since $|\mc B^-_{\zeta}|\le |{[M_{\zeta}]}^{r}|={\mu}_{\zeta}$,  we can 
 apply the inductive assumption 
 $\prop{{\mu}_{\xi}}$
 for $\mc B_{\zeta}^-$ provided that we can show that 
 $\|\mc B_{\zeta}^-\|$ is nowhere stationary. 
The next lemma will actually yield that 
$\|\mc B_{\zeta}^-\|\subs  \|\mc B\|\cup{\omega}$.

\smallskip

\noindent{\bf Claim.} {\em $|C^w_{\le {\eta}}\cap B|<|B|$ for each 
$B\in (\nmet{\mc B}{Y_{\le {\eta}}})\cap  {[{\kappa}]}^{\le {{\mu}_{\zeta}}}$.
}

\begin{proof}[Proof of the Claim]
Fix $B\in \nmet{\mc B}{Y_{\le {\eta}}}\cap  {[{\kappa}]}^{\le {{\mu}_{\zeta}}}$.

Let ${\beta}=\sup\{{\gamma}:{\nu}_{\gamma}<|B|\}$. Since ${\nu}_{\varepsilon}\notin S$
for limit ${\varepsilon}$, we have ${\nu}_{\beta}<|B|$.

Assume that $w_{{\xi}+1}(y)=C_{\xi}$ and $B\cap (C_{\xi}\setm M_{\eta})\ne  \empt$.
Assume that ${\mu}_{\xi+1}={\nu}_{\varepsilon}$. Since the sequence $\<{\nu}_{\alpha}:{\alpha}<{\rho}\>$ is 
continuous, by the definition of ${\mu}_{{\xi}+1}$, ${\varepsilon}$ can not be a limit ordinal. So
 ${\mu}_{\xi+1}={\nu}_{\alpha+1}$ for some ${\alpha}$.

We know
\begin{displaymath}
C_{\xi}\cap \bigcup ((\nmet{\mc B}{Y_{\le {\xi}+1}})\cap {[{\kappa}]}^{\le {\mu}_{{\xi}+1}})\subs 
M_{{\xi}+1}.
\end{displaymath}
by \eqref{cinside}\lsub{{\xi}+1}, and so 
 $|B|>{\mu}_{{\xi}+1}= {\nu}_{{\alpha}+1}$.

Since ${\xi+1}\le{\mu}_{\xi+1}$ by the definition of ${\mu}_{\xi+1}$, we have 
${\xi}\in {\nu}_{{\alpha}+1}$.

Since ${\nu}_{{\alpha}+1}<|B|$,  it follows that ${\nu}_{{\alpha}+1}\le {\nu}_{\beta}$.
So 
$$\{{\xi}:C_{\xi}\in \ran w_{\le {\eta}}\land B\cap (C_{\xi}\setm M_{\eta})\ne  \empt\}
\subs {\nu}_{\beta}<|B|.$$

So we proved the Claim.
\end{proof}

If $B\in \mc B_{\zeta}$, then $B\notin \mc B\sqcap M_{\eta}$, so 
$|B\cap M_{\eta}|<r$ or $|B|>|M_{\eta}|$.
 Moreover $ B\subs M_{\zeta}$, 
so $|B\cap (M_{\zeta}\setm M_{\eta}) |=|B|$.
Since $|B\cap C^w_{\le {\eta}}|<|B|$ by the Claim and $|B\cap C_{\eta}|<r$,
we have $|B|=|B\cap  (M_{\zeta}\setm M_{\eta})\setm (C_{\eta}\cup C^w_{\le {\eta}})|$.
Thus $\|\mc B_{\zeta}^-\|\subs {\omega}\cup (K\cap {\kappa})$.

Since $\mc B^-_{\zeta}\subs \mc P(M_{\zeta})$ and $|M_{\zeta}|={\mu}_{\zeta}$ and 
$\|\mc B^-_{\zeta}\|$ is nowhere stationary, so we can apply the inductive assumption $\prop{{\mu}_{\zeta}}$
for $\mc B_{\zeta}^-$ to obtain a set  
$Z_{\zeta}\subs {\kappa}\cap (M_{{\zeta}}\setm M_{{\eta}})\setm (C_{\eta}\cup C^w_{\le {\eta}})$
which is a minimal vertex cover for $\mc B'_{\zeta}$,
and so it is a  a minimal vertex cover for $\mc B_{\zeta}$, Fix   a 
witnessing function $v_{\zeta}:Z_{\zeta}\to \mc B_{\zeta}$ 
such that 
$v_{\zeta}(y)\cap Z_{\zeta}=\{y\}$ 
for each $y\in Z_{\zeta}$.

Let 
\begin{displaymath}
Y_{\zeta}=\{y_{\zeta}\}\cup Y'_{\zeta}
\text{ and } w_{\zeta}=v'_{\zeta}\cup v_{\zeta}.
\end{displaymath}
Now we should check that \eqref{first}\lsub{\zeta}-\eqref{last}\lsub{\zeta} hold.

\eqref{m-closed}\lsub{\zeta} and  \eqref{m-back}\lsub{{\zeta}} hold by the choice of $M_{\zeta}$.

\eqref{Ywlimit}\lsub{\zeta} is void.

\eqref{Yw}\lsub{\zeta} is clear from the construction. 

    Next we check \eqref{ay}\lsub{\zeta}.     
    We have $\met{\mc A}{Y'_{\zeta}}\supset \mc B _{\zeta}$
    and $C_{\eta}\in \met{\mc A}{Y_{\le {\eta}}\cup Y'_{\eta}}$ by the construction, 
    and $\mc A[Y_{\le {\eta}}]\supset (\mc B\sqcap \mc M_{\eta})\cup\{C_{\xi}:{\xi}<{\eta}\}$
    by the inductive assumption   \eqref{ay}\lsub{\eta},
    so putting together we obtain 
    $$\mc A[Y_{\le {\zeta}}]\supset (\mc B\cap \mc M_{\zeta}\cap {[{\kappa}]}^{\le|M_{\zeta}|})\cup\{C_{\xi}:{\xi}<{\zeta}\}.$$
    So \eqref{ay}\lsub{\zeta} holds.
    
    Next we check \eqref{wit}\lsub{\zeta}. 
    Since $Y_{\zeta}\subs M_{\zeta}\setm M_{<{\zeta}}\setm C^w_{\le {\eta}}$, and 
     $\bigcup\ran w_{\le {\eta}}\subs M_{<{\zeta}}\cup  C^w_{\le {\eta}}$, 
    so
    \eqref{wit}\lsub{\zeta} holds for $y\in Y_{\le {\eta}}$. 
    
    If $y_{\eta}$ is defined, then $C_{\eta}\cap Y_{\le {\eta}}=\empt$,
    so $w_{\zeta}(y)\cap Y_{\zeta}=\{y_{\eta}\}$ because
    $Z_{\zeta}\cap C_{\eta}=\empt$.

    If $y\in Z_{\zeta}$, then $w_{\zeta}(y)\in \mc B[-(Y_{\le {\eta}}\cup Y'_{\zeta})]$, 
    and $w_{\zeta}(y)\cap Z_{\zeta}=\{y\}$
    so 
    $w_{\zeta}(y)\cap Y_{\le {\zeta}}=\{y\}$. 
    
    Thus \eqref{wit}\lsub{\zeta} holds. 
    
    Observe that \eqref{cinside}\lsub{\zeta} also holds:
    if $w_{\zeta}(y_{\eta})=C_{\eta}$, then $|C'_{\eta}|\le |M_{\zeta}|$, and  so 
    $C'_{\zeta}\subs M^-_{\zeta}$ implies $C'_{\zeta}\subs M_{\zeta}$.

\noindent{\bf Step 4.} {\em Conclusion.}

After the inductive construction $Y=\bigcup_{{\zeta}<{\kappa}}Y_{\zeta}$
is a minimal vertex cover of $\mc A$. The minimality is witnessed by the function 
$w=\bigcup_{{\zeta}<{\kappa}}w_{\zeta}$.
\end{proof}

\section{Hypergraphs possessing property $\cad{k}{r}$ for $k\ge 3$}
\label{sc:ckr}

Instead of Theorem \ref{tm:ckr} we will prove the following stronger theorem:
 
\begin{theorem}\label{tm:ckrmwo}
    $\MWO{{\lambda}}{{\omega}}{r}{k}$
    provided ${\omega}\le {\lambda}$ and $k,r\in {\omega}$
\end{theorem}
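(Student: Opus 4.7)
The plan is to apply the stepping-up Theorem~\ref{tm:gen-step-up}(2) with $\kappa=\omega$ and the class
\[
\mbb A = \{\mc A \subseteq [\textstyle\bigcup\mc A]^{\omega} : \mc A \text{ has } \cad{r}{k}\}.
\]
Condition~(a) needs a caveat: two distinct shrinks of a single $A\in\mc A$ of size $\omega$ share $\omega$ elements and may violate $\cad{r}{k}$, so $\mbb A$ is not closed under arbitrary shrinks. However, the specific shrinks $\mc A_{\alpha}=\{A\setminus G_{\alpha}:A\in\mc A,\,A\subseteq G_{\alpha+1},\,|A\cap G_{\alpha}|<|A|\}$ that actually appear in the proof of the stepping-up theorem do preserve the property: distinct elements of $\mc A_{\alpha}$ necessarily come from distinct $A_i\in\mc A$, whence $|\bigcap_i(A_i\setminus G_{\alpha})|\le|\bigcap_i A_i|<k$. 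Running the stepping-up argument with this bookkeeping in mind, only (b) and (c') remain.

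The central tool is a counting lemma: for any infinite $X$,
\[
\bigl|\{A\in\mc A:|A\cap X|\ge k\}\bigr|\le|X|.
\]
Mapping each such $A$ to some $s(A)\in[A\cap X]^k$, property $\cad{r}{k}$ asserts that no $k$-subset of $X$ is contained in $r$ or more edges, so the fibers of $s$ have size $<r$, yielding the bound $(r-1)\cdot|X|^k=|X|$.

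For condition~(b) with $\lambda>\omega$, construct a continuous increasing chain $\langle M_{\zeta}:\zeta<\lambda\rangle$ with $\zeta+1\subseteq M_{\zeta+1}$, $|M_{\zeta}|<\lambda$, and closed under the implication $|A\cap M_{\zeta}|\ge k\Rightarrow A\subseteq M_{\zeta}$. The counting lemma keeps $|M_{\zeta+1}|\le\max(|M_{\zeta}|,\omega)$ after iterating closure $\omega$ times, and the closure is preserved at limit stages precisely because $k$ is finite: any $k$-element witness in $M_{\zeta}=\bigcup_{\xi<\zeta}M_{\xi}$ lies in some $M_{\xi_0}$, triggering closure there. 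The crucial observation is that for each $A\in\mc A$ the index $\zeta_A:=\min\{\zeta:A\subseteq M_{\zeta}\}$ is necessarily a successor: otherwise closure would give $|A\cap M_{\xi}|<k$ for all $\xi<\zeta_A$, hence $|A\cap M_{\zeta_A}|\le k-1<\omega$, contradicting $|A|=\omega$. Writing $\zeta_A=\eta_A+1$, closure forces $|A\cap M_{\eta_A}|<k<\omega$, and setting $G_{\alpha}:=M_{\lambda_{\alpha}}$ along any continuous cofinal sequence $\langle\lambda_{\alpha}:\alpha<\cf(\lambda)\rangle$ in $\lambda$ yields the good cut (the witnessing $\alpha$ for $A$ being $\max\{\beta<\cf(\lambda):\lambda_{\beta}\le\eta_A\}$).

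Condition~(c')---the base case $\bigcup\mc A\subseteq\omega$---is the main obstacle. By the counting lemma applied with $X=\omega$, $|\mc A|\le\omega$, so enumerate $\mc A=\{A_n:n<\omega\}$ and seek $m_n\in A_n$ together with a well-ordering $\preceq$ of $\omega$ realizing each $m_n$ as the $\preceq$-maximum of $A_n$, equivalently making the relation ``$v\prec m_n$ whenever $v\in A_n\setminus\{m_n\}$'' well-founded and cycle-free. For $r=2$ this is Komj\'ath's trick: pick $m_l\in A_l\setminus\bigcup_{n<l}A_n$, which is non-empty because pairwise intersections are bounded by $k$, and then any would-be infinite descending chain of indices strictly decreases in $\omega$, forcing finiteness. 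For $\cad{r}{k}$ with $r\ge 3$ the pairwise intersections may be infinite and this naive choice fails; one must instead exploit the stronger form of the hypothesis---each $k$-subset of $\omega$ lies in fewer than $r$ edges---to choose the $m_n$'s so that the induced digraph on $\{m_n:n<\omega\}$ is well-founded and hence extendible to the desired maximizing well-ordering.
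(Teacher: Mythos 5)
Your frame — reduce to Theorem \ref{tm:gen-step-up}(2) with a good-cut argument for (b) and a countable base case for (c') — is exactly the paper's, and your handling of (a) and (b) is sound. In fact your remark on (a) is more careful than the paper's ``trivial by definition'': the class is indeed not closed under arbitrary shrinks (two cofinite subsets of one countable edge have infinite intersection), and the fix is, as you say, that the shrinks $\mc A_{\alpha}$ actually produced by the stepping-up argument consist of sets coming from pairwise distinct edges. Your counting lemma and the closure chain $\langle M_{\zeta}\rangle$ give the same good cut as the paper's Lemma \ref{lm:goodckr}.

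The proposal fails, however, at condition (c'), which is where the entire new content of the theorem sits: you must prove that every \emph{countable} hypergraph with the given intersection property has a maximizing well-ordering, and for three or more edges in the intersection condition you only assert that ``one must exploit the stronger form of the hypothesis'' to make the induced digraph well-founded, without exhibiting any such choice of the $m_n$'s. This is not a routine verification; it is the paper's Lemma \ref{lm:ckr}, proved by induction on the number of edges appearing in the intersection condition. The idea you are missing is this: disjointify, setting $B_{n}=A_{n}\setm\bigcup_{i<n}A_{i}$, and observe that the trace family $\{A_{m}\cap B_{n}: m>n,\ A_{m}\neq A_{n}\}$ on a single block $B_{n}$ satisfies the intersection condition with \emph{one fewer} edge, because intersecting several of its members automatically adjoins $A_{n}\supseteq B_{n}$ as an extra distinct edge. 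The inductive hypothesis then yields a maximizing well-order of each block (adjusted so that $B_{n}$ itself also acquires a maximum), and stacking the blocks in order of $n$ gives the required well-order, since each $A_{n}$ attains its maximum in the last block it meets. Without this reduction (or a substitute for it) the proof is incomplete, since the case $\lambda=\omega$ is precisely what everything else is being reduced to.
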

 
\begin{proof}[Proof of Theorem \ref{tm:ckrmwo}]

    We want to apply Theorem \ref{tm:gen-step-up}(2) to obtain the result.
    So let  
    \begin{displaymath}
    \mbb A=\{\mc A:\exists {\lambda}\ge  {\omega}_2\ (\mc A\subs 
    {[{\lambda}]}^{{\omega}}\text{ has property $\cad{k}{r}$})\}.
    \end{displaymath}
    
We should verify properties \ref{tm:gen-step-up}(a,b,c').

(a) is trivial by definition. 

(b) follows from the  Lemma \ref{lm:goodckr} below.

\begin{lemma}\label{lm:goodckr}
If  ${\omega}\le {\kappa}<{\lambda}$ are infinite cardinals, and  $\mc A\subs {[{\lambda}]}^{{\kappa}}$ has property $\cad{k}{r}$
for some $k,r\in {\omega}$, then $\mc A$ has a good cut.
\end{lemma}

\begin{proof}
For $x\in {[{\lambda}]}^{r}$ let
\begin{displaymath}
F(x)=\{A\in \mc A:x \subs A \}
\end{displaymath} 
Since $\mc A$ is $\cad{k}{r}$, $|F(x)|\le k-1$.

Let $\<{\lambda}_{\alpha}:{\alpha}<{\omega}\cdot\cf({\lambda})\>$
be an increasing continuous cofinal sequence in ${\lambda}$ with 
${\lambda}_0=0$.
Define an increasing, continuous  sequence
$\<H_{\eta}:{\eta}<{\omega}\cdot\cf({\lambda})\>$
such that 
\begin{enumerate}[(1)]
 \item $H_0=\empt$
 \item $H_{{\eta}+1}=H_{\eta}\cup ({\eta}+1)\cup\bigcup \{F(x):
 x\in {[H_{\eta}]}^{r}\}$.   
\end{enumerate} 
Since $F(x)\le k-1$, $|H_{\eta}|\le |{\eta}|+{\omega}$.

Moreover $|A\cap H_{\eta}|\ge r$ implies $A\subs H_{{\eta}+1}$.

Let $G_{\alpha}=H_{{\omega}\cdot {\alpha}}$.
Then $\<G_{\alpha}:{\alpha}<\cf({\lambda})\>$ is a good cut of $\mc A$. 
\end{proof}

Property  (c') follows from the next lemma.

\begin{lemma}\label{lm:ckr} 
    If $\mc A$ is a countable hypergraph which possesses property $P(k,r)$,
    then $\mc A$ has a maximizing well-ordering  $\prec $.
   \end{lemma}
   
   \begin{proof}[Proof of Lemma \ref{lm:ckr}]
    We prove it by induction on $k$. If $k=1$, then we have that for all 
    $n\in \omega $, $|A_{n}|\le r-1$, so $A_{n}$ is a finite set. Choose
     any well-ordering $\prec $ on $\bigcup_{n\in \omega}A_{n}$,  then clearly all non-empty finite sets have maximal element by $\prec $.
    
    Now suppose the lemma is true for hypergraphs possessing property $\cad{k}{r}$ and let $\mc A$ be a 
    hypergraph with property $\cad{k+1}{r}$, and write  
    $\mc A=\{A_{n}:n<\omega \}$. 
    For $n<{\omega}$ let $$B_{n}=A_{n}\setm \bigcup_{i<n}A_{i},$$ the disjointification of $A_{n}$. 
    Clearly $B_{n}\cap B_{m}=\emptyset $ if $n\neq m$ and $\bigcup_{n\in \omega}B_{n}=\bigcup_{n\in \omega}A_{n}$, but 
    $B_{n}=\emptyset $ is possible even is $A_{n}\neq \emptyset $. Choose any $n$, such that $B_{n}\neq \emptyset $. 

    We will show that the system $\{A_{m}\cap B_{n}:m>n,A_{m}\neq A_{n} \} )$ is $\cad{k}{r}$. Choose any $m_{1},\dots,m_{k}>n$, 
    such that the sets $A_{m_{i}}\cap B_{n}$ are pairwise different for $1\le i\le k$. The the sets $A_{m_{i}}$ are also pairwise different.
     For the intersection 
     \begin{multline*}
        (A_{m_{1}}\cap B_{n})\cap \dots\cap (A_{m_{k}}\cap B_{n})=A_{m_{1}}\cap \dots\cap A_{m_{k}}\cap B_{n}\subseteq\\ 
     A_{m_{1}}\cap \dots\cap A_{m_{k}}\cap A_{n}
        \end{multline*}
     and $A_{m_{1}},\dots,A_{m_{k}},A_{n}$ are pairwise different, 
     so $|(A_{m_{1}}\cap B_{n})\cap \dots \cap (A_{m_{k}}\cap B_{n})|\le |A_{m_{1}}\cap \dots \cap A_{m_{k}}\cap A_{n}|\le r-1$, as $\mc A$ has property  
     $\cad{k+1}{r}$.
    
    By the inductive assumption  there is a well ordering $\prec '_{n}$ on 
    $$\bigcup { \{A_{m}\cap B_{n}:m>n,A_{m}\neq A_{n} \} ) },$$ such that for all $m>n$,
     where $A_{m}\neq A_{n}$ and $A_{m}\cap B_{n}\neq \emptyset $, 
    then it has a maximal element. We can make a $\prec _{n}$ well-ordering from it on $B_{n}$, so
     that $B_{n}$ has a maximal element, and for all $m>n$, if $A_{m}\cap B_{n}\neq \emptyset $, 
     then it has a maximal element. First if $B_{n}$ has elements that are not in any $A_{m}\cap B_{n}$, where $m>n,A_{m}\neq B_{n}$, 
    then we well-order them into the bottom of $B_{n}$, so the well-ordering is defined on the whole $B_{n}$.
     If $B_{n}$ does not have a maximal element by $\prec '_{n}$, then we pick any $v\in B_{n}$, and 
     elevate it into the top of $B_{n}$,
      while remaining the order of other elements. Doing this the order $\prec _{n}$ clearly remains a well-order.
       Then $v$ will be the new maximum of $B_{n}$. 
     For $m>n$ if $A_{m}\neq A_{n}$ and $A_{m}\cap B_{n}\neq \emptyset $,  a maximal element by $\prec '_{n}$. 
     If $v\not\in A_{m}\cap B_{n}$, then its order does not change, so it keeps its maximal element. 
     If $v\in A_{m}\cap B_{n}$, then it will be its new maximal element, as it is even maximal in $B_{n}$. Finally if $A_{m}=A_{n}$, then $A_{m}\cap B_{n}=A_{n}\cap B_{n}=B_{n}$, that have a maximal element.
    
    Now we define the ordering $\prec $. For any 
    $u,v\in \bigcup_{n\in \omega}A_{n}=\bigcup_{n\in \omega}B_{n}$ there are $i,j\in \omega $, such that $u\in B_{i},v\in B_{j}$. 
    If $i<j$, then $u\prec v$, if $j<i$, then $v\prec u$, and if $i=j$, then we order them by $\prec _{i}$ on $B_{i}$.
     By other words we take the well orders $\prec _{n}$-s on the top of each other, and then $\prec $ is also a well-order. Choose any $n\in \omega $, that $A_{n}\neq \emptyset $. 
    If $B_{n}\neq \emptyset $, then it has a maximal element. Since all elements of $A_{n}$ are either in $B_{n}$ or in 
    some $B_{i}$, where $i<n$, we have $\max(A_{n})=\max(B_{n})$. If $B_{n}=\emptyset $, then we have $\emptyset \neq A_{n}=\bigcup _{i<n}{B_{i}}$. 
    Let $j<n$ be the maximal, 
    such that $A_{n}\cap B_{j}\neq \emptyset $. 
    Then by the construction of $\prec _{j}$, we have that $A_{n}\cap B_{j}$ has a maximal element. 
    Since all elements of $A_{n}$ are even in $A_{n}\cap B_{j}$ or in some $A_{n}\cap B_{i}$, where $i<j$, it is also maximal in $A_{n}$. Then the well ordering $\prec $ is good, so the induction step works from $k$ to $k+1$.
    \end{proof}
   
So we can apply Theorem \ref{tm:gen-step-up}(2) to derive that 
every $\mc A\in \mbb A$ has a maximizing well-order. 

\end{proof}

\begin{definition}\label{df:min}
If $\mc E$ is a hypergraph, let $\min \mc E$ be the family of $\subs$-minimal elements of 
$\mc E$.
\end{definition}
Observe that if $\mc E$ is $\cad{k}{r}$, then for each $E\in \mc E$
there is $E'\in \min \mc E$ with $E'\subs E$. So a minimal vertex cover of $\min \mc E$
will be a minimal vertex cover of $\mc E$. 

\begin{proof}[Proof of Theorem \ref{tm:c3k}]

We prove first that $\MVC{{\omega}_1}{{\omega}_1}{3}{r}$ for $r<{\omega}$.

Let $\mc D\subs {[{\omega}_1]}^{{\omega}_1}$ be a hypergraph having property $\cad{3}{r}$.

Write $\mc D=\{D_{\alpha}:{\alpha}<{\omega}_1\}$.
For each ${\alpha}<{\omega}_1$
pick 
\begin{displaymath}
y_{\alpha}\in D_{\alpha}\setm\{D_{\xi}\cap D_{\zeta}:{\xi}<{\zeta}<{\alpha}\}.
\end{displaymath}

Let $Y=\{y_{\alpha}:{\alpha}<{\omega}_1\}$.

Then $\mc D[Y]=\mc D$.  

Moreover,  $\mc D\restriction Y$ is $\cad{3}{r}$ and $\cad{2}{{\omega}_1}$
because $y_{\alpha}\notin D_{\zeta}\cap D_{\xi}$ for ${\alpha}>\max({\zeta},{\xi})$.

Let $\mc A=\min (\mc D\restriction Y)$.

Then a minimal vertex cover of $\mc A$ will be a minimal vertex cover of $\mc D$.

So to find a minimal vertex cover of $\mc D$ it is enough  to prove the following lemma:

\begin{lemma}\label{lm:ckrc2oo}
Assume that a  hypergraph  $\mc A\subs \mc P({\omega}_1)$
possesses the following properties: 
\begin{enumerate}[(A)]
\item \label{pr:c3kckr} $\cad{k}{r}$,
\item \label{pr:cKrc2oo} $\cad{2}{{\omega}_1}$,
\item \label{pr:ckrsymdiff}  $A\setm A'\ne \empt$ for each $\{A,A'\}\in {[\mc A]}^{2}$.
\end{enumerate}
Then $\mc A$ has a minimal vertex cover.
\end{lemma}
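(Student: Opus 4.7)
The plan is to enumerate $\mc A = \{A_\alpha : \alpha < \mu\}$ where $\mu = |\mc A|$, and build a minimal vertex cover $Y$ together with a witness function $w$ by transfinite recursion on $\alpha < \mu$. First I would observe that in the intended application in the proof of Theorem \ref{tm:c3k}, the family $\mc A = \min(\mc D\restriction Y)$ has cardinality at most $\omega_1$, so we may assume $\mu \le \omega_1$; in the general case one reduces to this via passing to a suitable elementary submodel of size $\omega_1$.

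The invariant maintained at step $\alpha$ is that $Y_{<\alpha}$ is a minimal vertex cover of $\{A_\beta : \beta < \alpha\}$ witnessed by $w_{<\alpha}$. If $A_\alpha\cap Y_{<\alpha}\ne\empt$ there is nothing to do. Otherwise I aim to pick a fresh vertex $y_\alpha\in A_\alpha$ and use $A_\alpha$ itself as its witness; since $A_\alpha\cap Y_{<\alpha}=\empt$, the only remaining constraint is $y_\alpha\notin\bigcup\ran(w_{<\alpha})$, which is exactly what preserves the uniqueness clause $w_{<\alpha}(z)\cap Y=\{z\}$ for every previously assigned $z$.

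The numerical fact that drives the recursion is that at any step $\alpha<\omega_1$ we have $|\ran(w_{<\alpha})|\le|Y_{<\alpha}|\le|\alpha|$, so $\ran(w_{<\alpha})$ is countable. By property (B), each edge in $\ran(w_{<\alpha})$ meets $A_\alpha$ in at most $\omega$ points, so the forbidden portion of $A_\alpha$ has cardinality at most $\omega$. When $A_\alpha$ is uncountable this is strictly smaller than $|A_\alpha|=\omega_1$, and a suitable $y_\alpha$ exists immediately; one then sets $w_\alpha(y_\alpha)=A_\alpha$ and continues.

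The main obstacle, and the only delicate case, arises when $A_\alpha$ is countable and happens to be entirely contained in $\bigcup\ran(w_{<\alpha})$. I plan to address this by first applying Theorem \ref{tm:ckrmwo} to the subhypergraph $\mc A_{sm}=\mc A\cap\big([\omega_1]^{<\omega}\cup[\omega_1]^{\omega}\big)$ of countable edges (which inherits property (A)) to obtain a maximizing well-order and hence a minimal vertex cover $(Y_{sm},w_{sm})$; the above recursion is then run only on the uncountable edges in $\mc A_{lg}=\mc A\cap[\omega_1]^{\omega_1}$ still uncovered by $Y_{sm}$, with $\bigcup\ran(w_{sm})$ added to the forbidden set at every step. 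The heart of the argument is to show that for each such uncovered $C\in\mc A_{lg}$ the set $C\setminus\bigcup\ran(w_{sm})$ is nonempty; this is where (A) $\cad{k}{r}$ enters crucially, via a sunflower-style counting over the $(k-1)$-fold intersections of $w_{sm}$-witnesses with $C$, combined with the antichain hypothesis (C) to guarantee that each trace $w_{sm}(y)\cap C$ is a proper countable subset of $C$ and that the multiplicity with which points of $C$ lie in several such traces is controlled by $r$.
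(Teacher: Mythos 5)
There is a genuine gap, and it is located exactly where you say the ``heart of the argument'' lies. Your plan is to first fix a minimal vertex cover $(Y_{sm},w_{sm})$ of \emph{all} countable edges via Theorem \ref{tm:ckrmwo}, and only afterwards cover the uncountable edges by points avoiding $\bigcup\ran(w_{sm})$. The claim that $C\setminus\bigcup\ran(w_{sm})\ne\empt$ for every uncovered $C\in\mc A_{lg}$ is simply false, and no counting over $(k-1)$-fold intersections can rescue it: properties $\cad{k}{r}$ and $\cad{2}{{\omega}_1}$ do not prevent ${\omega}_1$-many \emph{pairwise disjoint} countable edges from covering $C$. Concretely, take $V={\omega}_1\times{\omega}$, $C={\omega}_1\times\{0\}$, and $B_{\alpha}=\{{\alpha}\}\times{\omega}$ for ${\alpha}<{\omega}_1$; this family is $\cad{3}{2}$, $\cad{2}{{\omega}_1}$ and an antichain. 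The black-box application of Theorem \ref{tm:ckrmwo} to $\{B_{\alpha}:{\alpha}<{\omega}_1\}$ may return $Y_{sm}=\{({\alpha},1):{\alpha}<{\omega}_1\}$ with $w_{sm}(({\alpha},1))=B_{\alpha}$, and then $C\cap Y_{sm}=\empt$ while $C\subs\bigcup\ran(w_{sm})$, so your recursion has no legal point of $C$ left to pick. (Note also that even nonemptiness of $C\setminus\bigcup\ran(w_{sm})$ would not suffice: you must further avoid the countably many earlier big witnesses, so you really need this set to be uncountable.)

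The moral is that the decomposition ``all small edges first, then the big ones'' is the wrong one; the freedom in how the small edges are covered must be spent \emph{in coordination with} the big edges. That is what the paper's proof does: it builds a continuous chain $\<M_{\zeta}:{\zeta}<{\omega}_1\>$ of countable sets closed under the small edges, and at stage ${\zeta}={\eta}+1$ it first decides how to hit $C_{\eta}$ --- using $C_{\eta}$ as its own witness \emph{only} when $C_{\eta}$ is disjoint from every still-uncovered small edge outside $M_{\eta}$ (Case 2.1, recorded as invariant \eqref{2cinside}), and otherwise hitting $C_{\eta}$ inside some uncovered small edge $B_{\eta}$ and letting $B_{\eta}$ be the witness (Case 2.2) --- and only then minimally covers the small edges that have newly appeared inside $M_{\zeta}$, restricted away from $M_{\eta}$ and from the protected witnesses. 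Your proposal contains no analogue of this case distinction, which is precisely the mechanism that prevents the future small-edge choices from invading a big edge that is serving as its own witness. Without it the argument does not go through.
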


\begin{proof}[Proof of the Lemma \ref{lm:ckrc2oo}]
   
    Write $\mc B=\mc A\cap {[{\omega_1}]}^{\le{\omega}}$
    and $\mc C=\mc A\cap {[{\omega_1}]}^{{\omega_1}}$.
    Adding pairwise disjoint "dummy" sets to $\mc A$ we can assume that $|\mc C|={\omega_1}$.
    Let 
    $\<C_{\xi}:{\xi}<{\omega_1}\>$  be an   
    enumeration of $\mc C$.

    \noindent{\bf Step 1.} {\em The inductive construction.}

    By transfinite recursion 
    on  ${\zeta}<{\omega_1}$ we  
    will define  an increasing, continuous sequence $\<M_{\zeta}:{\zeta}<{\omega_1}\>$ 
    of subsets of ${\omega_1}$, a sequence $\<Y_{\zeta}:{\zeta}<{\omega}\>$ of pairwise disjoint
    subsets of ${\omega_1}$, and a sequence $\<w_{\zeta}:{\zeta}<{\omega_1}\>$ of functions
    such that   
    \begin{enumerate}[(1)\lsub{\zeta}]
        \item \label{2first} \label{2m-closed} ${\zeta}\subs M_{\zeta}\in {[{\omega_1}]}^{{\omega}}$
        for $1\le {\zeta}<{\omega}_1$, $M_0=\empt$,    
        \item \label{2m-back} if $B\in \mc B $ and $|B\cap M_{\zeta}|\ge r$ 
        then 
        $B\subs M_{\zeta}$,
    \item \label{2Ywlimit} If ${\zeta}=0$ or ${\zeta}$ is a limit ordinal, then 
    $M_{\zeta}=M_{<{\zeta}}$ and $Y_{\zeta}=w_{\zeta}=\empt$,
        \item \label{2Yw} if ${\zeta}={\eta}+1$, then 
        $Y_{\zeta}\subs (M_{{\zeta}}\setm M_{\eta})\cap {\omega_1}$
        and 
        $$w_{\zeta}:Y_{\zeta}\to 
        \big((\mc B\sqcap M_{\zeta})\setm (\mc B\sqcap M_{\eta})\big)
        \cup\{C_{\eta}\},$$
    \item\label{2ay} 
    $\met{\mc A}{Y_{\le {\zeta}}}\supset 
    (\mc B\sqcap \mc M_{\zeta})\cup\{C_{\xi}:{\xi}<{\zeta}\}$,\smallskip
    \item \label{2wit} $w_{\le {\zeta}}(y)\cap Y_{\le {\zeta}}=\{y\}$ for each $y\in Y_{\le {\zeta}}$, \smallskip
    \item \label{2cinside}\label{2last} if $C_{\eta}= w_{\zeta}(y)$ for some $y\in Y_{\zeta}$, then  
    \begin{displaymath}
        C_{\eta}\cap \bigcup (\nmet{\mc B}{Y_{\le {\eta}}})\subs 
        M_{\zeta}.
        \end{displaymath}
    \end{enumerate}
      
    \noindent{\bf Step 2.} {\em The inductive step.}

    Assume that we have constructed $\<Y_{\xi}:{\xi}<{\zeta}\>$ and $\<w_{\xi}:{\xi}<{\zeta}\>$
    such that \eqref{first}\lsub{{\xi}}--\eqref{last}\lsub{\xi} hold for ${\xi}<{\zeta}$.
    
    \medskip 
    If ${\zeta}=0$, then let $M_{\zeta}=Y_{\zeta}=w_{\zeta}=\empt$.
    All requirements are trivial.
    
    \medskip 
    If ${\zeta}$ is a limit ordinal, put $M_{\zeta}=M_{<{\zeta}}$ 
    and $Y_{\zeta}=w_{\zeta}=\empt$.  
    \medskip 
    
    Then \eqref{m-closed}\lsub{\zeta} is trivial.
    \medskip 
    
    To check \eqref{m-back}\lsub{\zeta}, assume that 
    $B\in \mc B $ and $|B\cap M_{\zeta}|\ge r$ .
    
    Since $M_{\zeta}=M_{<{\zeta}}$, there is ${\xi}<{\zeta}$ such that 
    $|B\cap M_{\xi}|\ge r$.
    So
    $B\subs M_{\xi}\subs M_{\zeta}$ by \eqref{m-back}\lsub{\xi}.
    Thus \eqref{m-back}\lsub{\zeta} holds.
    
    \medskip 
    
    \eqref{Ywlimit}\lsub{\zeta} holds by the construction.

    \medskip 
    
    \eqref{Yw}\lsub{\zeta} is void. 
    
    \medskip

    To check \eqref{ay}\lsub{{\zeta}} first observe that for each ${\eta}<{\xi}$,
    $C_{\eta}\in \met{\mc A}{Y_{\le {\eta}+1}}$
    by \eqref{ay}\lsub{{{\eta}+1}}, so $\met{\mc A}{Y_{\le {\zeta}}}\supset 
    \{C_{\xi}:{\xi}<{\zeta}\}$.
    
    Assume that $B\in \mc B\sqcap M_{\zeta}$, i.e. $B\subs M_{\zeta}$ and $B\in \mc B$.
    If  $B$ is finite then $B\subs M_{\sigma}$ for some ${\sigma}<{\zeta}$.
    Thus $B\in \mc B\sqcap M_{\sigma}\subs \met{\mc A}{Y_{\le {\sigma}}}\subs \met{\mc A}{Y_{\le {\zeta}}}$
    
    So we can assume that $B$ is infinite. 
    Since $M_{\zeta}=M_{<{\zeta}}$, there is ${\xi}<{\zeta}$ such that 
    $|B\cap M_{\xi}|\ge r$.

    Then $|B\cap M_{\xi}|\ge r$  so 
    $B\subs M_{\xi}\subs M_{\zeta}$ by \eqref{m-back}\lsub{\xi}.
    Thus \eqref{ay}\lsub{\zeta} holds.

    \medskip 
    
    \eqref{wit}\lsub{\zeta}, 
    and 
    \eqref{cinside}\lsub{\zeta} are clear for limit ${\zeta}$
    because $Y_{\le {\zeta}}=Y_{< {\zeta}}$
    and $w_{\le {\zeta}}=w_{< {\zeta}}$.

    \medskip

    So for limit ${\zeta}$ we can carry out the inductive step.

    \medskip 
    
    Assume finally that ${\zeta}={\eta}+1.$
    
    \smallskip
    
    First we vertex cover $C_{\eta}$. 

    \smallskip

    \noindent{\bf Case 1.} $C_{\eta}\in \met{\mc C}{Y_{\le\eta}}$, i.e. $C_{\eta}\cap Y_{\le {\eta}}\ne \empt$.

    Let 
    \begin{displaymath}
    \text{$Y'_{\zeta}=\{\}$ and $
    v'_{\zeta}=\empt$
    }
    \end{displaymath}
and 
    \begin{displaymath}
        M_{\zeta}^-={\zeta}\cup M_{\eta}.
    \end{displaymath}

    \noindent{\bf Case 2.}
    Assume that $C_{\eta}\in \nmet{\mc C}{Y_{\le\eta}
    }$, i.e. $C_{\eta}\cap Y_{\le {\eta}}=\empt$.

Let \begin{displaymath}
\mc B'_{\zeta}=\min (\mc B[-Y_{\le {\eta}}]\restriction ({\omega}_1\setm M_{\eta}))
\end{displaymath}
Observe that if $B\in \mc B[-Y_{\le {\eta}}]$, then 
$B\setm M_{\eta}\ne \empt$ by \eqref{2ay}\lsub{{\eta}} .
    Write $$C'_{\eta}=C_{\eta}\cap \bigcup\mc B_{\zeta}'.$$

    \noindent{\bf Case 2.1.} $C'_{\eta}=\empt$.

    Since 
    $C''_{\eta}=C_{\eta}\setm \bigcup_{{\xi}<{\eta}}C_{\xi}$ has cardinality 
    ${\omega}_1$,
    so we can define 
    $$y_{\eta}=\min (C''_{\eta} \setm M_{\eta}).$$
    In this case let 
    \begin{displaymath}
    \text{$Y'_{\zeta}=\{y_{\eta}\}$ and $v'_{\zeta}=\{\<y_{\eta},C_{\eta}\>\}$
    }
    \end{displaymath}
and  
    \begin{displaymath}
        M_{\zeta}^-={\zeta}\cup M_{\eta}\cup Y'_{\zeta}.
    \end{displaymath}

    \noindent{\bf Case 2.2.} $C'_{\zeta}\ne \empt$.

    Then
 put
    $$y_{\eta}=\min (C'_{\zeta})\in {\omega}_1\setm M_{\eta},
    $$
    and  pick $B_{\eta}\in B[-Y_{\le {\eta}}]$ with   
    $$y_{\eta}\in (B_{\eta}\setm M_{\eta})\in \mc B'_{\zeta}.$$
    In this case let 
    \begin{displaymath}
    \text{$Y'_{\zeta}=\{y_{\eta}\}$ and $v'_{\zeta}=\{\<y_{\eta},B_{\eta}\>\}$}
    \end{displaymath}
and  
    \begin{displaymath}
        M_{\zeta}^-={\zeta}\cup M_{\eta}\cup Y'_{\zeta}\cup B_{\eta}.
    \end{displaymath}

    \medskip
    
    Next we want to define  sets  $M_{\zeta}$ and  $Z_{\zeta}$, and a function 
    $v_{\zeta}$
    such that 
    $Y_{\zeta}=Y'_{\zeta}\cup Z_{\zeta}$ and  $w_{\zeta}=w'_{\zeta}\cup v_{\zeta}$
    meet the requirements.

    Using standard closing arguments we can find a set  $M_{\zeta}\in {[{\omega_1}]}^{\omega}$ such that 
    \begin{enumerate}[(a)]
    \item $M_{\zeta}\supset M_{\zeta}'$,
    \item if $B\in \mc B $ and $|B\cap M_{\zeta}|\ge r$ 
    then 
    $B\subs M_{\zeta}$.
    \end{enumerate}

If we are in Case 1 or in Case 2.1, then let 
\begin{displaymath}
    \mc B_{\zeta}=\nmet {(\mc B\sqcap M_{\zeta})}{(Y_{\le {\eta}})}
    \restriction \big({\omega}_1\setm M_{\eta}\big ).
\end{displaymath}
Then $\empt\notin \mc B_{\zeta}\subs \mc P(M_{\zeta})$ and $\mc B_{\zeta}$ possesses property
$\cad{3}{k}$, so 
by theorem Theorem \ref{tm:ckr} the hypergraph $\mc B_{\zeta}$ has a minimal vertex cover $Z_{\zeta}$
and a witnessing function $v_{\zeta}$.

Assume that we are in Case 2.2. 

Let 
\begin{displaymath}
\mc B_{\zeta}=\{(B\setm M_{\eta})\setm B_{\eta}:B\in 
\nmet 
{(\mc B\sqcap M_{\zeta})}
{(Y_{\le {\eta}}\cup Y'_{\zeta}})
\}.
\end{displaymath}
Since $B_{\eta}\setm M_{\eta}$ was a minimal element of 
$\mc B[-Y_{\le {\eta}}]\restriction ({\omega}_1\setm M_{\eta})$, it follows
that $\empt\notin \mc B_{\eta}$.
So
by theorem Theorem \ref{tm:ckr} the hypergraph $\mc B_{\zeta}$ has a minimal vertex cover $Z_{\zeta}$
and a witnessing function $v_{\zeta}$.

    \smallskip

    Let 
    \begin{displaymath}
    Y_{\zeta}=Y'_{\zeta}\cup Z_{\zeta}
    \text{ and } w_{\zeta}=v'_{\zeta}\cup v_{\zeta}.
    \end{displaymath}
    Now we should check that \eqref{first}\lsub{\zeta}-\eqref{last}\lsub{\zeta} hold.

    \eqref{m-closed}\lsub{\zeta} and  \eqref{m-back}\lsub{{\zeta}} hold by the choice of $M_{\zeta}$.

    \eqref{Ywlimit}\lsub{\zeta} is void.
    
    \eqref{Yw}\lsub{\zeta} is clear from the construction.

    Next we check \eqref{ay}\lsub{\zeta}.     
    We have $\met{\mc A}{Y'_{\zeta}}\supset \mc B _{\zeta}$
    and $C_{\eta}\in \met{\mc A}{Y_{\le {\eta}}\cup Y'_{\eta}}$ by the construction, 
    and $\mc A[Y_{\le {\eta}}]\supset (\mc B\sqcap \mc M_{\eta})\cup\{C_{\xi}:{\xi}<{\eta}\}$
    by the inductive assumption   \eqref{ay}\lsub{\eta},
    so putting together we obtain 
    $\mc A[Y_{\le {\zeta}}]\supset (\mc B\cap \mc M_{\zeta})\cup\{C_{\xi}:{\xi}<{\zeta}\}$.
    So \eqref{ay}\lsub{\zeta} holds.
    
    Next we check \eqref{wit}\lsub{\zeta}.

    Write 
    \begin{displaymath}
    \mc C^w_{\le \eta}=\bigcup(\ran w_{\le {\eta}}\cap {[{\omega_1}]}^{{\omega_1}}),
    \end{displaymath} 
and observe that 
\begin{equation}\label{eq:CB}\tag{$\ddag $}
\text{$C^w_{\le {\eta}}\cap B=\empt$ for each 
$B\in \nmet{\mc B}{Y_{\le {\eta}}}$.}
\end{equation}

        Since $Y_{\zeta}\subs M_{\zeta}\setm M_{<{\zeta}}\setm C^w_{\le {\eta}}$, and 
         $\bigcup\ran w_{\le {\eta}}\subs M_{<{\zeta}}\cup  C^w_{\le {\eta}}$, 
        so
        \eqref{wit}\lsub{\zeta} holds for $y\in Y_{\le {\eta}}$. 
        
        If $y_{\eta}$ is defined, then $C_{\eta}\cap Y_{\le {\eta}}=\empt$,
        so $w_{\zeta}(y)\cap Y_{\zeta}=\{y_{\eta}\}$ because
        $Z_{\zeta}\cap C_{\eta}=\empt$.

        If $y\in Z_{\zeta}$, then $w_{\zeta}(y)\in \mc B[-(Y_{\le {\eta}}\cup Y'_{\zeta})]$, 
        and $w_{\zeta}(y)\cap Z_{\zeta}=\{y\}$
        so 
        $w_{\zeta}(y)\cap Y_{\le {\zeta}}=\{y\}$. 
        
        Thus \eqref{wit}\lsub{\zeta} holds. 
        
        Observe that \eqref{cinside}\lsub{\zeta} also holds:
        if $w_{\zeta}(y_{\eta})=C_{\eta}$, then $C'_{\eta}=\empt$, which is just the statement of 
        \eqref{cinside}\lsub{\zeta}.
    
    \noindent{\bf Step 4.} {\em Conclusion.}
    
    After the inductive construction $Y=\bigcup_{{\zeta}<{\kappa}}Y_{\zeta}$
    is a minimal vertex cover for $\mc A$. The minimality is witnessed by the function 
    $w=\bigcup_{{\zeta}<{\kappa}}w_{\zeta}$.
\end{proof}

\medskip

    We want to apply Theorem \ref{tm:gen-step-up}(1) to conclude the proof of Theorem  \ref{tm:c3k}.
    So let  
    \begin{displaymath}
    \mbb A=\{\mc A:\exists {\lambda}\ge  {\omega}\ (\mc A\subs 
    {[{\lambda}]}^{{\omega}_1}\text{ has property $\cad{3}{r}$})\}.
    \end{displaymath}

    Then \ref{tm:gen-step-up}(a) is trivial bt definition.
    \ref{tm:gen-step-up}(b) follows from Lemma  \ref{lm:goodckr}.
Finally \ref{tm:gen-step-up}(c) is just Lemma \ref{lm:ckrc2oo}.
\end{proof}

\section{Problems}

\begin{enumerate}[(1)]
\item  Does $\MVC{{\omega}_1}{{\omega}_1}{4}{r}$ hold?
\item  Does $\MVC{{\omega}_2}{{\omega}_2}{3}{r}$ hold?
\end{enumerate}

\end{document}